\newcommand{\mbb}{\mathbb}
\newcommand{\mc}{\mathcal}
\newcommand{\cpx}{\mbb{C}}
\newcommand{\ring}{\mbb{A}}
\newcommand{\sym}{\mathfrak S}
\newcommand{\nat}{\mbb{N}}
\newcommand{\op}{\operatorname}
\newcommand{\abs}[1]{\left| #1 \right|}
\newcommand{\arw}[1]{\overset{#1}{\longrightarrow}}
\newcommand{\dual}{\widehat}
\newcommand{\Hom}{\operatorname{Hom}}
\newtheorem*{defin}{Définition}
\newtheorem*{remark}{Remarque}
\newtheorem*{prop}{Proposition}
\newtheorem*{corol}{Corollaire}
\newtheorem*{theorem}{Théorème}
\newtheorem*{lem}{Lemme}
\newtheorem*{conjecture}{Conjecture}
\begin{document}
\title{Arithmétique des Groupes Abéliens Finis}

\author{Louis Mallet-Burgues}
\maketitle
\tableofcontents
\section{Introduction}
Le but de l'article est de présenter diverses techniques pour manipuler les groupes abéliens finis. On introduit un analogue de la convolution de Dirichlet qui permet d'obtenir des résultats combinatoires sur les groupes abéliens finis. \\\\ 
\textit{Il se trouve que cet outil avait déjà été introduit par Delsarte dans} \cite{delsarte}, \textit{chose dont je me suis rendu compte en en discutant avec un collègue.}
\\\\
On utilise ensuite la notion d'action régulière pour obtenir un fait surprenant : le nombre de parties génératrices d'un groupe abélien fini $G$ est divisible par le cardinal de $G$. \\\\
Enfin, on démontre un théorème sur la génération du groupe symétrique d'un groupe abélien $G$ avec des transpositions et des translations par des éléments du groupe $G$.

\section{Généralités sur les groupes abéliens finis}

On commence par rappeler quelques faits utiles sur les groupes abéliens finis. Pour ce qui est des notations, on notera $\abs{X}$ le cardinal d'un ensemble fini $X$, et parfois $\mbb Z_n$ pour $\mbb Z/n\mbb Z$. Enfin, si $G$ est un groupe, on notera $H\leq G$ pour signifier que $H$ est un sous-groupe de $G$ et $H<G$ si $H$ est un sous-groupe strict de $G$.

\subsection{Dualité}
Soit $G$ un groupe abélien fini. On note $\widehat G$ le groupe dual de $G$, c'est à dire le groupe des morphismes de $G$ dans $\cpx^\times$ (groupe multiplicatif du corps des nombres complexes), aussi appelés caractères. On rappelle que $G$ est isomorphe à un produit de groupes cycliques $\mbb Z_{d_1} \times \dots \times \mbb Z_{d_n}$ avec $d_1>1$ et $d_1\mid d_2\mid \dots \mid d_n$, et qu'il y a unicité de ces coefficients (appelés facteurs invariants). Le dual d'un produit de groupes abéliens finis est le produit des duaux, et le dual de $\mbb Z_n$ est isomorphe à $\mbb Z_n$, en choisissant une racine primitive $n$-ème de l'unité. On en déduit :
$$G\cong \dual G$$
Cependant, il n'y a pas d'isomorphisme canonique entre ces deux groupes en général. Notons qu'un morphisme $f:G\longrightarrow H$ induit un morphisme $f^* :\dual{H} \longrightarrow \dual{G}$ défini par $f^*(\chi)=\chi \circ f$. Cela définit un foncteur contravariant de la catégorie des groupes abéliens finis dans elle même. \\

\begin{prop}
    (Bidualité) \normalfont Soit $G$ un groupe abélien fini. On dispose d'un isomorphisme :
    $$G\arw{\alpha_G} \dual{\dual G}$$
    qui à $x$ associe le caractère $\chi \mapsto \chi(x)$. Cet isomorphisme est naturel en $G$, au sens où, pour tout morphisme $f:G\longrightarrow H$ entre groupes abéliens finis, le diagramme suivant commute :
    \begin{center}
\begin{tikzcd}
G \arrow[r, "f"] \arrow[d, "\alpha_G"']    & H \arrow[d, "\alpha_H"] \\
\widehat{\widehat{G}} \arrow[r, "f^{**}"'] & \widehat{\widehat{H}}  
\end{tikzcd}
    \end{center}
    En conséquence, \textbf{la catégorie des groupes abéliens finis est équivalente à sa duale}. Cela induit notamment une correspondance entre sous-groupes et quotients.
\end{prop}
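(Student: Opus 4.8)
The plan is to check that $\alpha_G$ is a well-defined homomorphism, prove it is injective, deduce surjectivity by a cardinality count, verify naturality by unwinding the double dual, and then read off the categorical statement.

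First, for a fixed $x\in G$ the map $\chi\mapsto\chi(x)$ lies in $\dual{\dual G}$ because $(\chi\psi)(x)=\chi(x)\psi(x)$, and $\alpha_G$ itself is a homomorphism for the same reason, so $\alpha_G$ is well defined. To obtain that it is an isomorphism it suffices to prove injectivity: applying the recalled isomorphism $G\cong\dual G$ twice gives $\abs{\dual{\dual G}}=\abs G$, and an injective map between finite sets of equal cardinality is bijective. Injectivity of $\alpha_G$ says exactly that the characters of $G$ separate points, i.e. for $x\neq 0$ there is $\chi$ with $\chi(x)\neq 1$. I would deduce this from the structure theorem already recalled: for $\mbb Z_n$ the character $\overline k\mapsto\zeta^k$, with $\zeta$ a primitive $n$-th root of unity, is faithful, and for $G\cong\mbb Z_{d_1}\times\cdots\times\mbb Z_{d_n}$ one composes a faithful character of the factor $\mbb Z_{d_j}$ on which $x$ has a nonzero coordinate with the projection onto that factor.

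Naturality is a direct chase once one unwinds $f^{**}=(f^*)^*$: for $\psi\in\dual H$ one has $f^*(\psi)=\psi\circ f$, and for $\varphi\in\dual{\dual G}$ one has $f^{**}(\varphi)=\varphi\circ f^*$, so that for $x\in G$ and $\psi\in\dual H$
$$\bigl(f^{**}(\alpha_G(x))\bigr)(\psi)=\alpha_G(x)\bigl(f^*(\psi)\bigr)=(\psi\circ f)(x)=\psi(f(x))=\bigl(\alpha_H(f(x))\bigr)(\psi),$$
which is the commutativity of the square. The only thing to watch is getting the two contravariant steps the right way round.

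Finally, $G\mapsto\dual G$, $f\mapsto f^*$ is the contravariant endofunctor of the category $\mc A$ of finite abelian groups recalled above, and the first two points say precisely that $\alpha$ is a natural isomorphism from $\op{id}_{\mc A}$ to $\dual{\dual{(-)}}$; hence $\dual{(-)}$, viewed as a covariant functor $\mc A\to\mc A^{\mathrm{op}}$, is an equivalence whose quasi-inverse is $\dual{(-)}$ itself (the required natural isomorphisms being $\alpha$ over $\mc A$ and its inverse over $\mc A^{\mathrm{op}}$), which is the asserted $\mc A\simeq\mc A^{\mathrm{op}}$. For the correspondence between subgroups and quotients I would observe that dualizing an inclusion $\iota\colon H\hookrightarrow G$ yields $\iota^*\colon\dual G\to\dual H$, $\chi\mapsto\chi|_H$, with kernel $H^\perp=\{\chi\in\dual G: \chi|_H=1\}\cong\dual{G/H}$; a count ($\abs{H^\perp}=\abs G/\abs H=\abs{\dual H}$, using $\abs{\dual K}=\abs K$) shows $\iota^*$ is surjective, so $\dual G/H^\perp\cong\dual H$, and dually each quotient of $G$ dualizes to a subgroup of $\dual G$. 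Since $(H^\perp)^\perp=H$, verified through $\alpha_G$, the map $H\mapsto H^\perp$ is an order-reversing bijection between the subgroups of $G$ and those of $\dual G\cong G$, the promised subgroup/quotient correspondence. None of this is deep: the one piece of real content is the separation of points by characters, resting on the structure theorem, and the main danger is keeping the variances straight both in the naturality square and in the statement of the equivalence with the opposite category.
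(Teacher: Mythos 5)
Votre démonstration est correcte et suit essentiellement la même démarche que celle de l'article : la vérification de la naturalité par le calcul $f^{**}(\alpha_G(x))(\chi)=\chi(f(x))=\alpha_H(f(x))(\chi)$ est identique. Vous détaillez en plus l'injectivité de $\alpha_G$ (séparation des points par les caractères via le théorème de structure, puis argument de cardinalité), étape que l'article expédie par un \og clairement \fg{}, ainsi que la correspondance sous-groupes/quotients qui est en fait traitée dans les propositions suivantes de l'article.
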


\begin{proof}
    $\alpha_G$ est clairement un morphisme bien défini de $G$ vers $\widehat{\widehat G}$. Le diagramme commute : soit $x\in G$ et $\chi \in \dual H$ :
    $$f^{**}(\alpha_G(x))(\chi)=(\alpha_G(x) \circ f^*)(\chi)=f^*(\chi)(x)=\chi\circ f(x)$$
    et :
    $$\alpha_H(f(x))(\chi)=\chi(f(x))$$
\end{proof}

\begin{defin}
    (Anhilateur et noyau) \normalfont
    Pour $H\leq G$, on note $H^\bot$ l'anhilateur de $H$, c'est à dire le sous-groupe de $\dual G$ des caractères nuls sur $H$. Pour $K\leq \dual G$, on note $K^\top$ le noyau de $K$, c'est à dire l'intersection des noyaux des $\chi \in K$.
\end{defin}

\begin{prop}
\normalfont
    Soient $H\leq G$ et $K\leq \dual G$. On a les isomorphismes canoniques suivants :
    $$\boxed{\widehat{G/H}\cong H^\bot}$$
    et :
    $$\boxed{K^\top \cong \widehat{\dual G /K}}$$
    En particulier : $\abs{G}=\abs{H}\times \abs{H^\bot}=\abs{K}\times\abs{K^\top}$. On a de plus la compatibilité suivante : $(H^\bot)^\top = H$ et $(K^\top)^\bot = K$. Enfin, si l'on considère que $(H^\bot)^\bot$ est un sous-groupe de $\dual{\dual G}$, alors :
    $$\alpha_G(H)=H^{\bot \bot}$$
    On a une relation similaire avec $\top$. Notons que $\bot$ et $\top$ définissent une bijection des sous-groupes de $G$ vers les sous-groupes de $\dual G$.
\end{prop}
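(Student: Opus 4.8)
Le plan est de tout faire reposer sur le premier isomorphisme encadré, les autres assertions en découlant formellement via la bidualité et un décompte de cardinaux. Je commencerais donc par $\widehat{G/H}\cong H^\bot$ : soit $\pi:G\to G/H$ la projection canonique ; elle induit $\pi^*:\widehat{G/H}\to\dual G$, $\psi\mapsto\psi\circ\pi$. Comme $\pi$ est surjectif, $\pi^*$ est injectif, et son image est exactement l'ensemble des caractères de $G$ triviaux sur $H=\ker\pi$, c'est-à-dire $H^\bot$ : une inclusion est immédiate, l'autre résulte de la propriété universelle du quotient. Ainsi $\pi^*$ fournit un isomorphisme canonique $\widehat{G/H}\arw{\sim}H^\bot$.

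Pour le second isomorphisme encadré, j'appliquerais ce qui précède à $\dual G$ et à un sous-groupe $K\leq\dual G$, obtenant $\widehat{\dual G/K}\cong K^\bot\leq\dual{\dual G}$, puis je transporterais par l'isomorphisme de bidualité $\alpha_G$ : puisque $\alpha_G(x)\in K^\bot$ équivaut à $\chi(x)=1$ pour tout $\chi\in K$, c'est-à-dire à $x\in K^\top$, on a $\alpha_G^{-1}(K^\bot)=K^\top$, d'où $K^\top\cong\widehat{\dual G/K}$. Le décompte de cardinaux s'en déduit : $\abs{H^\bot}=\abs{\widehat{G/H}}=\abs{G/H}=\abs G/\abs H$ (en utilisant $\abs{\dual A}=\abs A$ rappelé plus haut), d'où $\abs G=\abs H\,\abs{H^\bot}$, et de même $\abs G=\abs{\dual G}=\abs K\,\abs{K^\top}$.

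Pour la compatibilité $(H^\bot)^\top=H$, l'inclusion $H\subseteq(H^\bot)^\top$ est évidente, et le décompte précédent appliqué deux fois donne $\abs{(H^\bot)^\top}=\abs{\dual G}/\abs{H^\bot}=\abs H$, d'où l'égalité ; symétriquement $(K^\top)^\bot=K$. Ensuite, pour $\alpha_G(H)=H^{\bot\bot}$, je remarquerais que $\alpha_G(x)\in(H^\bot)^\bot\subseteq\dual{\dual G}$ signifie $\chi(x)=1$ pour tout $\chi\in H^\bot$, soit $x\in(H^\bot)^\top=H$ ; comme $\alpha_G$ est bijectif, cela donne $\alpha_G(H)=H^{\bot\bot}$, et la relation analogue avec $\top$ se traite de la même façon. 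Enfin, que $\bot$ et $\top$ soient des bijections réciproques entre sous-groupes de $G$ et sous-groupes de $\dual G$ n'est qu'une reformulation de $(H^\bot)^\top=H$ et $(K^\top)^\bot=K$. L'obstacle principal — ou plutôt le point sur lequel tout repose — est vraiment le premier isomorphisme joint à $\abs{\dual A}=\abs A$ ; une fois ceux-ci acquis, le reste n'est que comptage et chasse au diagramme.
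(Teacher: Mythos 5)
Votre démonstration est correcte et suit essentiellement la même démarche que celle de l'article : propriété universelle du quotient pour $\widehat{G/H}\cong H^\bot$, transport par la bidualité pour $K^\top\cong\widehat{\dual G/K}$, puis arguments de cardinalité pour les compatibilités. La seule variante, mineure, est que vous déduisez $\alpha_G(H)=H^{\bot\bot}$ directement de $(H^\bot)^\top=H$ au lieu d'un décompte de cardinaux, ce qui est tout aussi valable.
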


\begin{proof}
    D'abord, $\widehat{G/H}=\operatorname{Hom}(G/H,\cpx^\times)\cong \{f\in \operatorname{Hom}(G,\cpx^\times)\mid H\subseteq \operatorname{Ker} f\}=H^\bot$ par propriété universelle du quotient. Ensuite : $\dual{\dual G/K}=\Hom(\dual G/K,\cpx^\times)\cong\{f\in \dual{\dual G} \mid K\subseteq \operatorname{Ker}f\}\cong \{x\in G\mid \forall \chi \in K\  \chi(x)=0\}=K^\top$. Les égalités sur les cardinaux s'en déduisent directement puisqu'un groupe abélien fini est isomorphe à son dual. Ensuite on a clairement :
    $$H^{\bot \top}\supseteq H$$ 
    et l'autre inclusion est vraie pour des raisons de cardinal. On raisonne de même pour la deuxième égalité. Enfin, on a aisément $\alpha_G(H)\subseteq H^{\bot \bot}$ et l'autre inclusion est également vraie pour des raisons de cardinal.
\end{proof}

\subsection{Correspondance entre sous-groupes et quotients}

\begin{prop}
     \normalfont Soit $G$ un groupe abélien fini. On note $\mc S(G)$ l'ensemble des sous-groupes de $G$. Il existe $\Gamma$ une bijection décroissante de réciproque décroissante(pour l'inclusion), de $\mc S(G)$ vers $\mc S(G)$ telle que, pour tout $H\leq G$ :
    $$\Gamma(H)\cong G/H$$
    et
    $$\Gamma^{-1}(H)\cong G/H$$
    On appellera \textbf{division} une telle bijection $\Gamma$. Une division donne ainsi une correspondance entre les sous-groupes de $G$ et les quotients de $G$.
\end{prop}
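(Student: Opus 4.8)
Le plan est de se ramener à la dualité, qui fournit déjà presque tout. En effet, d'après la proposition précédente, l'application $H\mapsto H^\bot$ est une bijection décroissante de $\mc S(G)$ vers $\mc S(\dual G)$, de réciproque $K\mapsto K^\top$ également décroissante, et l'on dispose de l'isomorphisme canonique $H^\bot \cong \dual{G/H}\cong G/H$. Il ne manque qu'un transport de $\dual G$ vers $G$. Je fixerais donc un isomorphisme $\varphi : G \arw{\sim}\dual G$ --- il en existe un puisque $G\cong \dual G$, mais aucun n'est canonique, ce qui expliquera \emph{a posteriori} qu'une division ne le soit pas non plus --- et je poserais $\Gamma(H)=\varphi^{-1}(H^\bot)$.

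Comme $\varphi$ est un isomorphisme, l'application qu'il induit sur les treillis de sous-groupes est un isomorphisme d'ordre (croissant de réciproque croissante) ; en le composant avec $\bot$, on obtient que $\Gamma$ est une bijection décroissante, de réciproque $\Gamma^{-1}(K)=\bigl(\varphi(K)\bigr)^\top$ (ce qui utilise $(H^\bot)^\top = H$), elle aussi décroissante. Pour la première propriété de quotient, il suffit d'écrire $\Gamma(H)=\varphi^{-1}(H^\bot)\cong H^\bot\cong \dual{G/H}\cong G/H$. Pour la seconde, je partirais de $\varphi(K)\leq \dual G$, j'appliquerais l'isomorphisme $\bigl(\varphi(K)\bigr)^\top\cong \dual{\dual G /\varphi(K)}$ de la proposition précédente, puis j'observerais que $\varphi$ identifie $\dual G/\varphi(K)$ à $G/K$, d'où $\Gamma^{-1}(K)\cong \dual{G/K}\cong G/K$.

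À vrai dire, je ne vois pas d'obstacle sérieux : tout le contenu non trivial (bijectivité de $\bot$, compatibilité de $\bot$ et $\top$, égalités de cardinaux, isomorphismes $H^\bot\cong\dual{G/H}$ et $K^\top\cong\dual{\dual G/K}$) a déjà été établi. Le seul point sur lequel il faut rester vigilant est de ne pas se contenter de vérifier $\Gamma(H)\cong G/H$ en oubliant la condition symétrique $\Gamma^{-1}(H)\cong G/H$ : c'est exactement là que sert le second isomorphisme encadré de la proposition précédente, qui est l'analogue « dual » du premier, et c'est aussi la raison pour laquelle on a pris soin de faire jouer des rôles symétriques à $\bot$ et $\top$.
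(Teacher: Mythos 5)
Votre preuve est correcte et suit essentiellement la même démarche que celle de l'article : fixer un isomorphisme non canonique $G\cong \dual G$, transporter la correspondance $\bot/\top$, et conclure avec les isomorphismes encadrés de la proposition précédente. La seule différence est l'ordre de composition (vous prenez $H\mapsto \varphi^{-1}(H^\bot)$ là où l'article prend $H\mapsto \theta(H)^\top$), ce qui revient à échanger $\Gamma$ et $\Gamma^{-1}$ et ne change rien vu la symétrie de l'énoncé.
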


\begin{proof}
    Choisissons $\theta$ un isomorphisme de $G$ vers $\dual G$. Un tel isomorphisme induit clairement une bijection $\Theta$ des sous-groupes de $G$ vers les sous-groupes de $\dual G$. On pose alors $\Gamma$ la composée :
    \begin{center}
    \begin{tikzcd}
    \mathcal{S}(G) \arrow[r, "\Theta"] & \mathcal{S}(\widehat G) \arrow[r, "\bullet^\top"] & \mathcal{S}(G)
    \end{tikzcd}
    \end{center}
    C'est une composée de deux bijections, l'une croissante et l'autre décroissante, donc c'est une bijection décroissante (et de même pour la réciproque). Il reste à constater :
    $$\Gamma(H)=\Theta(H)^\top\cong \dual{\dual G / \Theta(H)}\cong \dual G/\Theta(H)= \theta(G)/\theta(H) \cong G/H$$
    et 
    $$\Gamma^{-1}(H)=\Theta^{-1}(H^\bot)\cong H^\bot \cong \widehat{G/H}\cong G/H$$
    A priori, ces isomorphismes ne sont pas canoniques, même une fois $\Gamma$ fixé.
\end{proof}

Une conséquence intéressante est le fait suivant :

\begin{prop}
    \normalfont Soit $G$ un groupe abélien fini. On note $\min G$ le cardinal minimal d'une partie génératrice de $G$. On rappelle que $\min G/H\leq \min G$ (si $x_1,\dots,x_n$ génèrent $G$, leurs images génèrent $G/H$). Ainsi, par la correspondance entre sous-groupes et quotients, on a gratuitement que pour tout $H\leq G$ :
    $$\min H\leq \min G$$
\end{prop}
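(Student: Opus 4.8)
Le plan est d'exploiter la correspondance entre sous-groupes et quotients établie plus haut, c'est-à-dire l'existence d'une division $\Gamma$ de $G$. D'abord, je remarquerais que $\min$ est un invariant d'isomorphisme : si $G_1\cong G_2$, alors $\min G_1=\min G_2$. On dispose déjà, comme rappelé dans l'énoncé, de l'inégalité $\min G/N\leq \min G$ pour tout $N\leq G$ ; il suffit donc de réaliser $H$ comme quotient de $G$.

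Pour cela, je fixerais une division $\Gamma$ de $G$ et je poserais $K=\Gamma^{-1}(H)\leq G$. La proposition précédente, appliquée au sous-groupe $K$, donne $\Gamma(K)\cong G/K$. Or $\Gamma(K)=\Gamma(\Gamma^{-1}(H))=H$ puisque $\Gamma$ est une bijection, d'où $H\cong G/K$. On conclut alors immédiatement :
$$\min H=\min(G/K)\leq \min G.$$

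Il n'y a pas d'obstacle sérieux : la seule subtilité est de choisir le bon sens dans lequel appliquer $\Gamma$, de manière à faire apparaître $H$ (et non $G/H$) comme un quotient de $G$. On aurait d'ailleurs tout aussi bien pu poser $K=\Gamma(H)$ et invoquer $\Gamma^{-1}(K)\cong G/K$. Le reste est purement formel, ce qui illustre bien l'intérêt de la notion de division.
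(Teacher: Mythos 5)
Votre preuve est correcte et suit exactement la démarche du papier : celui-ci se contente d'invoquer la correspondance sous-groupes/quotients (la division $\Gamma$) pour réaliser $H$ comme quotient de $G$, ce que vous explicitez proprement via $K=\Gamma^{-1}(H)$ et $H=\Gamma(K)\cong G/K$. Rien à redire, votre rédaction est même plus détaillée que celle du texte original.
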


\subsection{Sous-groupes isomorphes à un groupe fixé}

On utilisera le lemme suivant à plusieurs reprises :

\begin{lem}
    \normalfont Soient $A$ et $B$ deux groupes abéliens finis. On note $\op{Sub}_B(A)$ l'ensemble des sous-groupes de $A$ isomorphes à $B$ et $\op{Mono}(B,A)$ l'ensemble des morphismes injectifs (ou monomorphismes) de $B$ dans $A$. On a alors :
    $$\boxed{\abs{\op{Sub}_B(A)} = \frac{\abs{\op{Mono}(B,A)}}{\abs{\op{Aut}B}}}$$
\end{lem}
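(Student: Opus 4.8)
The plan is to set up an action of $\operatorname{Aut}(B)$ on the set $\operatorname{Mono}(B,A)$ by precomposition and identify the orbits with elements of $\operatorname{Sub}_B(A)$. Concretely, $\operatorname{Aut}(B)$ acts on $\operatorname{Mono}(B,A)$ via $(\sigma, \varphi) \mapsto \varphi \circ \sigma^{-1}$ (or $\varphi\circ\sigma$, the side doesn't matter for a group action up to convention); this is well-defined since the composite of injective morphisms is injective, and it is a genuine action.

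Next I would observe that the map sending a monomorphism $\varphi$ to its image $\operatorname{Im}\varphi$ is a surjection from $\operatorname{Mono}(B,A)$ onto $\operatorname{Sub}_B(A)$: every subgroup isomorphic to $B$ is the image of at least one such $\varphi$ (pick any isomorphism $B \to H$ and compose with the inclusion $H \hookrightarrow A$). The key claim is that two monomorphisms $\varphi, \psi$ have the same image if and only if they lie in the same $\operatorname{Aut}(B)$-orbit. One direction is immediate ($\operatorname{Im}(\varphi\circ\sigma) = \operatorname{Im}\varphi$). For the converse, if $\operatorname{Im}\varphi = \operatorname{Im}\psi =: H$, then $\varphi$ and $\psi$ both corestrict to isomorphisms $B \to H$, so $\psi^{-1}\circ\varphi$ (using the corestrictions) is an automorphism of $B$ carrying $\varphi$ to $\psi$.

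From this, the fibers of the image map are exactly the $\operatorname{Aut}(B)$-orbits. It remains to check that each orbit has size exactly $\abs{\operatorname{Aut}B}$, i.e.\ that the action is free: if $\varphi\circ\sigma = \varphi$ with $\varphi$ injective, then $\sigma = \operatorname{id}$ by left-cancellation of the monomorphism $\varphi$. Hence every fiber has cardinality $\abs{\operatorname{Aut}B}$, and partitioning $\operatorname{Mono}(B,A)$ into its fibers gives
$$\abs{\operatorname{Mono}(B,A)} = \abs{\operatorname{Sub}_B(A)} \cdot \abs{\operatorname{Aut}B},$$
which rearranges to the claimed formula.

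**The main obstacle** — really the only point requiring care — is the converse direction of the orbit/fiber correspondence: verifying cleanly that equality of images forces the two monomorphisms to differ by an automorphism of $B$. This hinges on the fact that a monomorphism between finite groups corestricts to an isomorphism onto its image, so that "dividing" one monomorphism by another makes sense; once that is spelled out, freeness of the action and surjectivity of the image map are routine. Everything else is bookkeeping about group actions (orbit–stabilizer in the special case of a free action).
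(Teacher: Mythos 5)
Your proof is correct and follows essentially the same route as the paper: a free right action of $\op{Aut}B$ on $\op{Mono}(B,A)$ by precomposition, with orbits identified (via the image map) with the subgroups of $A$ isomorphic to $B$. You simply spell out in more detail the freeness and the orbit--image correspondence that the paper leaves as remarks.
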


\begin{proof}
    Pour le voir, il suffit de remarquer que $\op{Aut}B$ agit librement sur $\op{Mono}(B,A)$ et que les orbites de l'action s'identifient aux classes d'isomorphisme de monomorphismes de $B$ vers $A$, c'est à dire aux sous-groupes de $A$ isomorphes à $B$.
    \\\\
    Plus précisément, si $g\in \op{Aut}B$ et $f\in \op{Mono}(B,A)$, l'action de $g$ sur $f$ est donnée par $f\cdot g = f\circ g$ (action à droite).
\end{proof}

\section{Fonctions Abéliennes}

\subsection{Algèbre des fonctions Abéliennes}   
On considère un système de représentants à isomorphisme près des groupes abéliens finis, $\mbb G$. Pour rester dans la théorie des ensembles, on peut prendre les sous-groupes abéliens des groupes symétriques, mais on ne se préoccupera pas de ce type de questions. On note $\mbb A$ le $\mbb C$-espace vectoriel des applications de $\mbb G$ dans $\mbb C$. Ces applications sont appelées \textbf{fonctions abéliennes}. On munit $\mbb A$ du produit de convolution défini comme cela : soient $f,g\in \mbb A$, on définit :
$$\boxed{f*g(G)=\sum_{H\leq G}f(H)g(G/H)}$$
pour $G\in \mbb G$ (la somme porte sur tous les sous-groupes de $G$, pas seulement à isomorphisme près). Ici, il faut comprendre $f(G)$ comme $f(G_0)$ avec $G_0$ le représentant de la classe d'isomorphisme de $G$. On peut d'ailleurs voir les éléments de $\mbb A$ comme des "applications" qui à un groupe fini abélien $G$ associent un nombre indépendant de $G$ à isomorphisme près. On définit aussi un produit terme à terme :
$$f\cdot g(G)=f(G)g(G)$$
Notons d'ailleurs que $fg$ et $f*g$ sont bien définis puisque leur valeur en $G$ ne dépend pas du choix de $G$ à isomorphisme près. On remarque que $\delta$, la fonction abélienne valant $1$ sur le groupe trivial et $0$ pour tout autre groupe, est un élément neutre pour $*$.

\begin{defin} (L'algèbre $\mbb A$)
\normalfont \\
    $(\mbb A, *)$ est une $\mbb C$-algèbre commutative, associative et unitaire. \textbf{Dans la suite, $\mbb A$ désignera la $\mbb C$-algèbre $\mbb A$ munie de la loi $*$}. \\
    Ses éléments inversibles sont exactement les fonctions $f$ telles que $f(1)\neq 0$. $\mbb A$ est donc un anneau local.
\end{defin}

\begin{proof}
    Fixons $G\in \mbb G$. \\
    Choisissons une division $\mc{S}(G)\arw{\Gamma}\mc S(G)$ sur $G$. Voyons la commutativité :
    $$f*g(G)=\sum_{H\leq G}f(H)g(G/H)=\sum_{K\leq G}f(\Gamma^{-1}(K))g(K)=\sum_{K\leq G}f(G/K)g(K)=g*f(G)$$
    en posant $K=\Gamma(H)$ (changement de variable bijectif). \\ 
    À présent, voyons l'associativité :
    \begin{align*}
    f*(g*h)(G)&=\sum_{H\leq G}f(H)(g*h)(G/H)
    \\&=\sum_{H\leq G}\sum_{K\leq G/H}f(H)g(K)h(G/H/K)
    \\&= \sum_{H\leq G}\sum_{H\leq L\leq G}f(H)g(L/H)h\left(\frac{G/H}{L/H}\right)
    \\&= \sum_{H\leq G}\sum_{H\leq L\leq G} f(H)g(L/H)h(G/L)
    \end{align*}
    et :
    \begin{align*}
    (f*g)*h(G)&=\sum_{L\leq G}(f*g)(L)h(G/L)
    \\&=\sum_{L\leq G}\sum_{H\leq L}f(H)g(L/H)h(G/L)
    \end{align*}
    $\delta$ est le neutre pour $*$ : par commutativité, il suffit de vérifier $f*\delta=f$, ce qui est clair :
    $$f*\delta(G)=\sum_{H\leq G}f(H)\delta(G/H)=f(G)$$
    Ensuite, si $f(1)\neq 0$, on peut construire par récurrence sur l'ordre de $G$ un nombre $g(G)$ qui ne dépend que de $G$ à isomorphisme près : on pose $g(1)=1/f(1)$, et pour tout groupe $G$ non trivial :
    $$\boxed{g(G)=-\frac 1 {f(1)}\sum_{H<G}g(H)f(G/H)}$$
    qui est bien défini par récurrence forte (le membre de droite ne dépend pas de $G$ à isomorphisme près car c'est le cas des $g(H)$ pour $H<G$). $g$ définit donc une fonction abélienne et on vérifie aisément (par récurrence forte) que :
    $$f*g=g*f=\delta$$
    L'ensemble des éléments non inversibles est donc l'idéal maximal formé des $f$ nulles en $1$, c'est donc le seul idéal maximal de $\ring$.
\end{proof}

\subsection{Fonctions multiplicatives}

\begin{lem}
    \normalfont (Sous-groupe d'un produit de groupes d'ordres premiers entre eux) \\
    Soient $G$ et $H$ deux groupes finis de cardinaux $m$ et $n$ premiers entre eux. Alors les sous-groupes de $G\times H$ sont exactement les produits $A\times B$ avec $A\leq G$ et $B\leq H$, et on a ainsi une bijection :
    $$\boxed{\mc S(G)\times \mc S(H)\arw{\sim} \mc S(G\times H)}$$
\end{lem}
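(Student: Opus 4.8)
The plan is to prove the statement in two halves: first show that every subgroup of $G \times H$ splits as a product when $\gcd(|G|,|H|) = 1$, then check that the map $(A,B) \mapsto A \times B$ is a bijection. The splitting is the heart of the matter.

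\medskip

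For the splitting, let $K \leq G \times H$, and let $\pi_G : G \times H \to G$ and $\pi_H : G \times H \to H$ be the two projections. Set $A = \pi_G(K)$ and $B = \pi_H(K)$; these are subgroups of $G$ and $H$ respectively, and clearly $K \subseteq A \times B$. The claim is that this inclusion is an equality. The key observation is that $|K|$ divides $|A \times B| = |A|\,|B|$, while simultaneously $|A|$ divides $|G| = m$ and $|B|$ divides $|H| = n$. Since $\gcd(m,n) = 1$, we have $\gcd(|A|,|B|) = 1$. Now I would argue that $A \times \{e_H\} \subseteq K$: an element of $A$ is $\pi_G(k)$ for some $k = (a, b) \in K$; raising $k$ to a power $\equiv 1 \pmod{|A|}$ and $\equiv 0 \pmod{|B|}$ (which exists by the Chinese remainder theorem, since $\gcd(|A|,|B|)=1$) kills the second coordinate while preserving the first — here I use that $b^{|B|} = e_H$, which holds because $b \in B$ and $|B|$ divides... wait, more carefully: $b$ has order dividing $|H|$, but I need order dividing $|B|$; since $b \in B \leq H$ we do get $b^{|B|} = e_H$ by Lagrange. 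Symmetrically $\{e_G\} \times B \subseteq K$, and therefore $A \times B = (A \times \{e_H\})(\{e_G\} \times B) \subseteq K$, giving $K = A \times B$.

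\medskip

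Conversely, any product $A \times B$ with $A \leq G$ and $B \leq H$ is visibly a subgroup of $G \times H$, so the map $\mc S(G) \times \mc S(H) \to \mc S(G \times H)$, $(A,B) \mapsto A \times B$, is well-defined and surjective by the previous paragraph. For injectivity, from $A \times B$ one recovers $A$ and $B$ as the images under the projections (or: $A = (A \times B) \cap (G \times \{e_H\})$ up to the obvious identification), so the map is a bijection. One should also note in passing that this bijection respects inclusion in both directions, though that is not strictly demanded by the statement.

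\medskip

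\textbf{Main obstacle.} The genuinely delicate point is the coprimality bookkeeping in the splitting step: one must be careful that the exponents used are orders of elements within $A$ and $B$ (so that Lagrange gives $\gcd(|A|,|B|)=1$ from $\gcd(m,n)=1$), rather than carelessly working with $m$ and $n$ directly on elements that need not have those orders. Everything else — well-definedness, surjectivity, injectivity — is routine once the product decomposition of an arbitrary subgroup is in hand.
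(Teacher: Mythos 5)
Your proof is correct and follows essentially the same route as the paper: take $A=\pi_G(K)$, $B=\pi_H(K)$, and recover $K=A\times B$ by raising elements of $K$ to suitable powers that kill one coordinate, then conclude the bijection from surjectivity and the recovery of $A,B$ via the projections. The only (harmless) difference is that you use CRT with $|A|,|B|$ to get $A\times\{e\}$ and $\{e\}\times B$ inside $K$ separately, whereas the paper applies a single Bézout relation $1=mu+nv$ on $m,n$ and notes $(a,c)^{nv}(d,b)^{mu}=(a,b)$ — so the caution in your closing remark is unnecessary, since Lagrange applied to $a\in G$ and $b\in H$ makes the direct use of $m$ and $n$ perfectly sound.
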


\begin{proof}
    Soit $K\leq G\times H$. On pose $A=\pi_G(K)$ et $B=\pi_H(K)$. On se donne une relation de Bézout $1=mu+nv$. Soit $(a,b)\in A\times B$. Il existe $c\in H$ et $d\in G$ tels que : $(a,c)\in K$ et $(d,b)\in K$. On a donc :
    $$(a,c)^{nv}(d,b)^{mu}=(a,b)$$
    par théorème de Lagrange, donc $(a,b)\in K$. De plus, $K$ est clairement contenu dans $A\times B$, donc :
    $$K=A\times B$$
    Ainsi, l'application $\mc S(G)\times \mc S(H)\longrightarrow \mc S(G\times H)$ qui à $(A,B)$ associe $A\times B$ est surjective, et elle est injective car $A=\pi_G(A\times B)$ et $B=\pi_H(A\times B)$.
\end{proof}

\begin{defin} (Fonctions multiplicatives) \normalfont
    Une fonction abélienne $f$ est dite \textbf{multiplicative} si pour tous $G,H\in \mbb G$ d'ordres premiers entre eux, on a : $f(G\times H)=f(G)f(H)$ et si $f(1)=1$. On note $\mbb M$ l'ensemble des fonctions abéliennes multiplicatives, c'est un sous-groupe de $\ring^\times$. $f$ est dite \textbf{complétement multiplicative} si la relation reste valable pour $G$ et $H$ quelconques.
\end{defin}

\begin{proof}
    D'abord, $\mbb M\subseteq \ring^\times$ d'après la proposition qui précède. Ensuite, le produit de deux fonctions abéliennes multiplicatives est multiplicative : si $f$ et $g$ sont multiplicatives, on a $f*g(1)=f(1)g(1)=1$ et pour $\abs{G}\wedge \abs{H}=1$ :
    $$f*g(G\times H)=\sum_{K\leq G\times H}f(K)g((G\times H)/K)=\sum_{A\leq G,\ B\leq H}f(A\times B)g(G/A \times H/B)$$
    par le lemme précédent. Or $A$ et $B$ ont des ordres premiers entre eux (par Lagrange) et pareil pour $G/A$ et $H/B$, donc, par multiplicativité de $f$ et $g$ :
    $$f*g(G\times H)=\sum_{A\leq G,\ B\leq H}f(A)f(B)g(G/A)g(H/B)=(f*g(G))(f*g(H))$$
    donc $f*g$ est multiplicative. Voyons maintenant que $f^{-1}$ est multiplicative. Pour cela, on montre par récurrence forte sur $\abs{G}\times \abs{H}$ que, lorsque $\abs{G}\wedge \abs{H}=1$ : $f^{-1}(G\times H)=f^{-1}(G)f^{-1}(H)$. Si $G$ est trivial ou si $H$ est trivial, c'est clair. Supposons $G$ et $H$ non triviaux. Par hypothèse de récurrence on peut écrire :
    \begin{align*}
        0 &= \delta(G\times H) \\
        &= \sum_{\substack{A\leq G\\ B\leq H}}f^{-1}(A)f^{-1}(B)f(G/A)f(H/B) \\
        &= \sum_{\substack{A< G\\ B< H}}f^{-1}(A)f^{-1}(B)f(G/A)f(H/B) + f^{-1}(G\times H)\\
        &\ \ \ \ \ +\sum_{A< G}f^{-1}(A)f^{-1}(H)f(G/A)+\sum_{B< H}f^{-1}(B)f^{-1}(G)f(H/B) \\
        &= \sum_{A<G}f^{-1}(A)f(G/A)\sum_{B<H}f^{-1}(B)f(H/B) + f^{-1}(G\times H) \\
        &\ \ \ \ \ - f^{-1}(H)f^{-1}(G)-f^{-1}(G)f^{-1}(H) \\
        &=(-1)^2f^{-1}(G)f^{-1}(H)-2f^{-1}(G)f^{-1}(H)+f^{-1}(G\times H)
    \end{align*} 
    donc $f^{-1}(G\times H)=f^{-1}(G)f^{-1}(H)$, ce qui achève la récurrence. $\mbb M$ est donc un sous-groupe de $\ring^\times$ ($\delta$ est clairement multiplicative).
\end{proof}

\subsection{Lien avec la convolution de Dirichlet}
\begin{defin}
    (Fonctions arithmétiques sur $\mbb N^*$) \normalfont
    On peut aussi définir $\mbb A_{\mbb N^*}$ comme la $\mbb C$-algèbre des fonctions de $\mbb N^*$ dans $\cpx$ avec le produit de convolution $f*g(n)=\sum_{d\mid n}f(d)g(n/d)$.
    On définit de même les fonctions multiplicatives $\mbb M_{\mbb N^*}$ (ce sont les fonctions arithmétiques qui vérifient $f(1)=1$ et $f(ab)=f(a)f(b)$ dès que $a\wedge b=1$). Notons que $\mbb A_{\nat ^*}$ est un anneau intègre local.
\end{defin}

\begin{prop}
    \normalfont On dispose d'un morphisme surjectif de $\mbb C$-algèbres :
    $$\mbb A \longrightarrow \mbb A_{\mbb N^*}$$
    qui envoie $f$ sur $n\mapsto f(\mbb Z/n\mbb Z)$. \\
    Le noyau est l'idéal premier des fonctions abéliennes nulles sur les groupes cycliques. Ce morphisme induit un morphisme surjectif de groupes abéliens :
    $$\mbb M \longrightarrow \mbb M_{\mbb N^*}$$
\end{prop}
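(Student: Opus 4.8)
The plan is to verify the four assertions in turn, the only non-formal input being the description of the subgroups and quotients of a cyclic group; everything else is bookkeeping. First I would check that $\Phi \colon f \mapsto (n \mapsto f(\mathbb{Z}/n\mathbb{Z}))$ is a well-defined morphism of $\mathbb{C}$-algebras. Well-definedness and $\mathbb{C}$-linearity are immediate, since the groups $\mathbb{Z}/n\mathbb{Z}$ for distinct $n$ lie in distinct isomorphism classes; moreover $\Phi(\delta)$ is the arithmetic function supported at $1$ with value $1$, i.e.\ the unit of $\mathbb{A}_{\mathbb{N}^*}$. For compatibility with convolution, the point is that the subgroups of $\mathbb{Z}/n\mathbb{Z}$ are exactly the $\langle n/d\rangle$ for $d \mid n$, each such subgroup being cyclic of order $d$ with cyclic quotient of order $n/d$. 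Hence
\[
(f*g)(\mathbb{Z}/n\mathbb{Z}) = \sum_{H \leq \mathbb{Z}/n\mathbb{Z}} f(H)\, g\bigl((\mathbb{Z}/n\mathbb{Z})/H\bigr) = \sum_{d \mid n} f(\mathbb{Z}/d\mathbb{Z})\, g(\mathbb{Z}/(n/d)\mathbb{Z}) = \bigl(\Phi(f)*\Phi(g)\bigr)(n).
\]

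Next, for surjectivity I would exhibit an explicit section: given $a \in \mathbb{A}_{\mathbb{N}^*}$, let $f \in \mathbb{A}$ be defined by $f(\mathbb{Z}/n\mathbb{Z}) = a(n)$ and $f(G) = 0$ for every non-cyclic $G$; this is a legitimate abelian function with $\Phi(f) = a$. The kernel description is then immediate: $f \in \ker\Phi$ iff $f(\mathbb{Z}/n\mathbb{Z}) = 0$ for all $n$, i.e.\ iff $f$ vanishes on all cyclic groups. Since $\mathbb{A}_{\mathbb{N}^*}$ is an integral domain (as recalled above) and $\Phi$ is a surjective ring morphism, $\ker\Phi$ is a prime ideal.

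Finally, for the multiplicative functions I would first note that $\Phi$ restricts to a group morphism $\mathbb{A}^\times \to \mathbb{A}_{\mathbb{N}^*}^\times$ (ring morphisms preserve units), so it suffices to check $\Phi(\mathbb{M}) \subseteq \mathbb{M}_{\mathbb{N}^*}$ and that this restriction stays surjective. If $f \in \mathbb{M}$ and $a \wedge b = 1$, then $\mathbb{Z}/ab\mathbb{Z} \cong \mathbb{Z}/a\mathbb{Z} \times \mathbb{Z}/b\mathbb{Z}$ by the Chinese remainder theorem, and since the orders $a$ and $b$ are coprime, multiplicativity of $f$ yields $\Phi(f)(ab) = \Phi(f)(a)\Phi(f)(b)$, with $\Phi(f)(1) = f(1) = 1$. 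For surjectivity I would reuse the extension-by-zero section: for $a \in \mathbb{M}_{\mathbb{N}^*}$, the function $f$ with $f(\mathbb{Z}/n\mathbb{Z}) = a(n)$ and $f(G) = 0$ on non-cyclic $G$ is multiplicative, because $f(1) = a(1) = 1$, because a product of two cyclic groups of coprime orders is again cyclic (so the relation $f(G \times H) = f(G)f(H)$ reduces to $a(mn) = a(m)a(n)$), and because a direct product with a non-cyclic factor is non-cyclic (a non-cyclic factor embeds as a subgroup, and subgroups of cyclic groups are cyclic), so both sides vanish in that case. I expect this last observation --- that a non-cyclic factor forces the whole product to be non-cyclic --- to be the only place where a careless argument could slip, since it is precisely what makes the extension-by-zero lift genuinely multiplicative rather than merely a section of the underlying linear map.
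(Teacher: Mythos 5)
Votre preuve est correcte et suit essentiellement la même démarche que celle de l'article : correspondance entre diviseurs de $n$ et sous-groupes (et quotients) de $\mathbb{Z}/n\mathbb{Z}$ pour la compatibilité aux convolutions, prolongement par zéro sur les groupes non cycliques pour la surjectivité, intégrité de $\mathbb{A}_{\mathbb{N}^*}$ pour la primalité du noyau, et théorème chinois pour le cas multiplicatif. Vous explicitez simplement davantage les vérifications (notamment le fait qu'un produit avec un facteur non cyclique est non cyclique), que l'article laisse au lecteur.
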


\begin{proof}
    On vérifie facilement que c'est un morphisme d'algèbres car les sous-groupes (et les quotients) de $\mbb Z/n\mbb Z$ sont en correspondance bijective avec les diviseurs de $n$. La surjectivité est claire, et le noyau est un idéal premier puisque $\mbb A_{\mbb N^*}$ est intègre. \\
    Ensuite, si $f\in \mbb M$, alors son image dans $\ring_{\nat ^*}$ est multiplicative, car si $m\wedge n=1$, $\mbb Z/m\mbb Z \times \mbb Z/n\mbb Z \cong \mbb Z/(mn)\mbb Z$. \\
    La surjectivité de ce morphisme est encore vraie : soit $f\in \mbb M_{\nat ^*}$. On définit simplement, pour $G\in \mbb G$ : $g(G)=f(n)$ si $G$ est cylique d'ordre $n$, et $0$ si $G$ n'est pas cyclique. On vérifie facilement que $g$ est multiplicative.
\end{proof}

\subsection{Exemples}
Donnons à présent quelques exemples importants de fonctions abéliennes. 

\begin{defin} \normalfont
    La fonction $1$ (valant constamment $1$) est multiplicative, donc d'inverse multiplicatif. \textbf{On note $\mu$ cet inverse (fonction de Mobïus abélienne)}. D'après la proposition qui précède sur le lien avec la convolution de Dirichlet, on a $\mu(\mbb Z/n\mbb Z)=\mu(n)$ pour tout $n\in \nat^*$. Dans la partie suivante, on donne une formule explicite pour $\mu(G)$ pour un groupe abélien fini $G$.
\end{defin}

 On note aussi $\varphi(G)$ le nombre de générateurs de $G$. Encore une fois, on a $\varphi(\mbb Z/n\mbb Z)=\varphi(n)$. La fonction $\operatorname{Card}$ est clairement multiplicative et induit la fonction identité de $\mbb N^*$ dans $\mbb M_{\nat ^*}$. La fonction \textbf{nombre de sous-groupes} est simplement $1*1$ (c'est aussi la fonction \textbf{nombre de quotients}).

\begin{prop}
    \normalfont $\varphi$ est multiplicative et $\varphi * 1=\operatorname{Card}$, i.e. $\varphi = \mu * \operatorname{Card}$.
\end{prop}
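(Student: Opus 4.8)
first, that $\varphi$ is multiplicative; second, the convolution identity $\varphi * 1 = \operatorname{Card}$, from which $\varphi = \mu * \operatorname{Card}$ follows immediately by convolving with $\mu = 1^{-1}$.

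For multiplicativity: let $G, H$ have coprime orders. A generator of $G \times H$ is an element $(x,y)$ that generates the whole group. I would argue that $(x,y)$ generates $G \times H$ if and only if $x$ generates $G$ and $y$ generates $H$. One direction is clear (the projections of a generating element generate the factors). For the converse, if $x$ generates $G$ and $y$ generates $H$, then since $|G| \wedge |H| = 1$, the element $(x,y)$ has order $\operatorname{lcm}(|x|,|y|) = |x|\cdot|y| = |G|\cdot|H| = |G\times H|$, so it generates. Hence $\varphi(G\times H) = \varphi(G)\varphi(H)$, and $\varphi(1) = 1$ trivially, so $\varphi \in \mbb M$.

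For the convolution identity, I would compute $(\varphi * 1)(G) = \sum_{H \leq G} \varphi(H)\cdot 1(G/H) = \sum_{H \leq G} \varphi(H)$. The claim is that this equals $|G|$. The key observation is that every element $x \in G$ generates exactly one subgroup, namely $\langle x \rangle$, and within that subgroup $x$ is a generator. So partitioning $G$ according to the cyclic subgroup generated by each element gives $|G| = \sum_{C \leq G} \#\{x \in G : \langle x \rangle = C\} = \sum_{C \leq G} \varphi(C)$, where the sum is over all (cyclic) subgroups $C$; non-cyclic subgroups contribute $\varphi = 0$ and can be harmlessly included. This yields $\varphi * 1 = \operatorname{Card}$ exactly. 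Then convolving both sides on the right by $\mu$ and using $1 * \mu = \delta$ gives $\varphi = \varphi * \delta = \varphi * 1 * \mu = \operatorname{Card} * \mu = \mu * \operatorname{Card}$ by commutativity.

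I do not anticipate a serious obstacle here; the only point requiring the slightest care is the order-of-an-element argument in the coprime case (making sure that $\operatorname{lcm}(|x|,|y|)$ really is the order of $(x,y)$ in a direct product, which is standard), and being explicit that the counting identity $\sum_{H\leq G}\varphi(H) = |G|$ is a genuine equality of integers rather than merely an isomorphism-class statement — but since both sides are manifestly invariants of $G$ up to isomorphism, this is automatic and the identity holds in $\mbb A$.
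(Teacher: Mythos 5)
Your proof is correct, and the core of it coincides with the paper: the identity $\abs{G}=\sum_{H\leq G}\varphi(H)$, obtained by grouping the elements of $G$ according to the subgroup they generate, is exactly the paper's argument for $\varphi * 1=\operatorname{Card}$, and the passage to $\varphi=\mu*\operatorname{Card}$ by convolving with $\mu$ is the same. Where you diverge is on multiplicativity: you prove it directly, by checking that $(x,y)$ generates $G\times H$ if and only if $x$ generates $G$ and $y$ generates $H$ when $\abs{G}\wedge\abs{H}=1$ (your order/lcm argument is fine, including the degenerate case where one factor is not cyclic, since then both sides vanish). The paper instead gets multiplicativity for free: since $\operatorname{Card}$ is multiplicative and $\mu\in\mbb M$, and $\mbb M$ was shown earlier to be a subgroup of $\ring^\times$ stable under $*$, the identity $\varphi=\mu*\operatorname{Card}$ already places $\varphi$ in $\mbb M$. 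So your direct verification is logically redundant once you have the convolution identity, but it has the merit of being self-contained (it uses nothing about $\mbb M$ being closed under convolution), whereas the paper's route is shorter and illustrates the intended use of the machinery — indeed the paper then reads off the multiplicativity of the classical Euler function as a corollary rather than an input.
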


\begin{proof}
    Soit $G\in \mbb G$. On regroupe les éléments de $G$ selon le sous-groupe qu'ils engendrent :
    $$\abs{G}=\sum_{H\leq G}\varphi(H)$$
    donc $\varphi * 1 = \operatorname{Card}$ et $\varphi = \mu * \operatorname{Card}\in \mbb M$ car $\mbb M$ est un groupe. Notons qu'à l'aide du morphisme défini précedemment, on en déduit aussi la multiplicativité de la fonction d'Euler.
\end{proof}

Par le même procédé, on démontre que le nombre de $t$-uplets $(a_1,\dots,a_t)$ générant $G$ donne la fonction multiplicative $\mu * (G\mapsto \abs{G}^t)$. La fonction $\mu$ intervient ainsi dans de nombreux calculs. On peut aussi s'intéresser au nombre de parties à $d$ éléments qui engendrent $G$ et obtenir $\mu * {\abs{\bullet} \choose d}$. 
\\
Enfin, la fonction $N_t = \mu * t^{\abs{\bullet}}$ pour $t\geq 1$ nous interessera dans la section suivante, où on verra que $\abs{G}\mid N_t(G)$.

\begin{prop}
\normalfont
    $N_1$ est la fonction $\delta$ et $N_2$ est le nombre de parties génératrices de $G$.
\end{prop}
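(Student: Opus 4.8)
The plan is to obtain both assertions by a short Möbius inversion in the algebra $\ring$. The first is immediate: the function $1^{\abs\bullet}$, namely $G\mapsto 1^{\abs G}$, is nothing but the constant function $1$, and since $\mu$ was \emph{defined} as the convolution inverse of $1$ we get $N_1 = \mu * 1 = \delta$.

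For $N_2$, let $g$ be the abelian function sending a group $G$ to its number of generating subsets, i.e.\ the number of $A\subseteq G$ with $\langle A\rangle = G$; this is well defined up to isomorphism since a group isomorphism carries generating subsets to generating subsets. The key step is the counting identity
$$2^{\abs G}=\sum_{H\leq G}g(H),$$
obtained by sorting the $2^{\abs G}$ subsets $A\subseteq G$ according to the subgroup $\langle A\rangle$ they generate: a subset with $\langle A\rangle = H$ is exactly a generating subset of $H$, and every subgroup $H\leq G$ occurs exactly once. The right-hand side is $(g*1)(G)$, so $g*1 = 2^{\abs\bullet}$; convolving with $\mu$ (and using commutativity of $*$) yields $g = \mu * 2^{\abs\bullet} = N_2$.

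I expect no genuine obstacle here. The only points deserving a little care are: recognising $1^{\abs\bullet}$ as the unit-valued function $1$ and not as $\delta$; the bookkeeping of the empty subset, which generates the trivial subgroup and keeps the identity consistent at the trivial group (there $g$ takes the value $2$, matching $2^{\abs\bullet}$); and the direction of the convolution, which is harmless given that $*$ is commutative.
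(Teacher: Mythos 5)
Your proof is correct and follows essentially the same route as the paper: $N_1=\mu*1=\delta$ by definition of $\mu$, and the identity $2^{\abs{G}}=\sum_{H\leq G}P(H)$ obtained by sorting subsets according to the subgroup they generate, then convolving with $\mu$. Nothing to add.
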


\begin{proof}
    On a $N_1=\mu * 1=\delta$. Notons ensuite $P$ la fonction "nombre de parties génératrices". Pour tout groupe $G\in \mbb G$, on peut dénombrer les parties de $G$ en les regroupant selon le sous-groupe $H\leq G$ qu'elles engendrent :
    $$2^{\abs{G}}=\sum_{H\leq G}P(H)$$
    Ainsi $2^{\abs{\bullet}}=P*1$ donc $P=N_2$.
\end{proof}

\subsection{Calcul de $\mu$}
Dans cette partie, on donne une formule explicite pour $\mu(G)$ (où $\mu * 1 =\delta$) en fonction des facteurs invariants de $G$. Pour cela, $\mu$ étant multiplicative, il est clair qu'il suffit de la calculer pour les $p$-groupes.

\begin{prop}
    \normalfont (Cas des espaces vectoriels sur $\mbb F_p$)
    Soit $p$ un nombre premier et $n\in \nat$. On a :
    $$\boxed{\mu\left(\mathbb Z_p^n\right)=(-1)^np^{\frac{n(n-1)}{2}}}$$
    En particulier, cet exemple montre que $\mu$ n'est pas bornée (contrairement à la fonction $\mu$ de Möbius usuelle).
\end{prop}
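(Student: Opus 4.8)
Le point de départ est la relation $\mu * 1 = \delta$, qui s'écrit $\sum_{H \leq G} \mu(H) = \delta(G)$ ; pour $G = \mathbb Z_p^n$ avec $n \geq 1$, le membre de droite est nul. Or les sous-groupes de $\mathbb Z_p^n$ sont exactement ses sous-$\mathbb F_p$-espaces vectoriels, un tel sous-espace de dimension $k$ étant isomorphe à $\mathbb Z_p^k$, et le nombre de sous-espaces de dimension $k$ de $\mathbb F_p^n$ est le coefficient binomial gaussien
$$\binom{n}{k}_p = \prod_{j=0}^{k-1}\frac{p^n - p^j}{p^k - p^j}$$
(on dénombre les bases ordonnées). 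On obtient ainsi la relation de récurrence
$$\sum_{k=0}^{n} \binom{n}{k}_p\, \mu(\mathbb Z_p^k) = 0 \quad (n \geq 1), \qquad \mu(\mathbb Z_p^0) = \mu(1) = 1 .$$
Comme $\binom{n}{n}_p = 1$, cette récurrence détermine entièrement la suite des $\mu(\mathbb Z_p^n)$.

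Il suffit donc de vérifier que la suite candidate $u_n := (-1)^n p^{n(n-1)/2}$ satisfait la même récurrence et la même valeur initiale, autrement dit que pour tout $n \geq 1$ :
$$\sum_{k=0}^{n} \binom{n}{k}_p (-1)^k p^{k(k-1)/2} = 0 .$$
Je déduirais cela du théorème $q$-binomial de Gauss,
$$\prod_{j=0}^{n-1}(1 + q^j x) = \sum_{k=0}^{n} \binom{n}{k}_q\, q^{k(k-1)/2}\, x^k ,$$
en spécialisant $q = p$ et $x = -1$ : le produit de gauche a alors son facteur d'indice $j = 0$ égal à $1 - 1 = 0$.

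Si l'on ne veut pas admettre cette identité, je la démontrerais par récurrence sur $n$ à l'aide de la relation de Pascal $\binom{n}{k}_q = \binom{n-1}{k}_q + q^{n-k}\binom{n-1}{k-1}_q$ : en l'injectant dans $S_n(x) := \sum_{k=0}^{n}\binom{n}{k}_q q^{k(k-1)/2} x^k$ et en réindexant la deuxième somme, on trouve $S_n(x) = (1 + q^{n-1}x)\,S_{n-1}(x)$, puis $S_n(x) = \prod_{j=0}^{n-1}(1+q^j x)$ puisque $S_0 = 1$. C'est la seule étape réellement calculatoire ; le passage de $\mu * 1 = \delta$ à la récurrence et le dénombrement des sous-espaces sont formels. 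La difficulté principale tient donc uniquement à reconnaître que la récurrence fournie par $\mu * 1 = \delta$ coïncide avec l'identité $q$-binomiale évaluée en $x = -1$. Cela acquis, $u_n$ et $\mu(\mathbb Z_p^n)$ vérifient la même récurrence avec $u_0 = 1 = \mu(1)$, d'où $\mu(\mathbb Z_p^n) = (-1)^n p^{n(n-1)/2}$ pour tout $n$ ; en particulier $\mu$ n'est pas bornée.
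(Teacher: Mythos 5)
Votre démonstration est correcte et repose sur la même réduction que celle de l'article : les sous-groupes de $\mathbb Z_p^n$ sont ses sous-$\mathbb F_p$-espaces vectoriels, dénombrés par les coefficients binomiaux gaussiens, et la relation $\mu * 1=\delta$ fournit une récurrence qui détermine entièrement les $\mu(\mathbb Z_p^k)$, de sorte qu'il suffit de vérifier que le candidat $(-1)^kp^{k(k-1)/2}$ annule la somme $\sum_{k=0}^n\binom{n}{k}_p(-1)^kp^{k(k-1)/2}$ pour $n\geq 1$. La seule divergence porte sur la preuve de cette identité : l'article la traite à la main, en réduisant au même dénominateur puis en montrant par une récurrence télescopique que les sommes partielles valent $\prod_{i=1}^{k}(p^i-p^n)\prod_{i=k+1}^{n}(p^i-1)$, le facteur nul $p^n-p^n$ apparaissant au rang $k=n$ ; vous la déduisez du théorème $q$-binomial de Gauss spécialisé en $x=-1$, le facteur d'indice $j=0$ annulant le produit, avec au besoin une preuve de ce théorème par la relation de Pascal quantique. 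Les deux arguments sont en substance le même calcul, mais votre organisation est plus conceptuelle : elle identifie la somme comme l'évaluation d'un polynôme générateur et donne en prime la factorisation complète de $\sum_{k}\binom{n}{k}_q q^{k(k-1)/2}x^k$, tandis que celle de l'article est entièrement autonome et n'invoque aucune identité extérieure. Pour que votre rédaction soit complète, il faudrait justifier la relation de Pascal $\binom{n}{k}_q=\binom{n-1}{k}_q+q^{n-k}\binom{n-1}{k-1}_q$ que vous utilisez (vérification immédiate sur la formule produit) ; c'est un détail du même ordre que le dénombrement des sous-espaces par les bases ordonnées, pas une lacune.
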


\begin{proof}
    Les sous-groupes de $\mbb Z_p^n$ sont exactement les sous $\mathbb F_p$-espaces vectoriels de $\mbb Z_p^n$, ils sont donc de la forme (à isomorphisme près) $\mbb Z_p^d$ avec $d$ leur dimension. Le nombre de sous-groupes de $\mbb Z_p^n$ isomorphes à $\mbb Z_p^d$, pour $0\leq d\leq n$ est donné par :
    $$\frac{\abs{\op{Mono}(\mbb Z_p^d,\mbb Z_p^n)}}{\abs{\op{Aut}\mbb Z_p^d}}=
    \frac{(p^n-1)\dots(p^n-p^{d-1})}{(p^d-1)\dots(p^d-p^{d-1})}$$
    (voir section $2$). \\
    Ceci étant dit, il est clair qu'il suffit de montrer que pour tout $n$ :
    $$\sum_{d=0}^{n}(-1)^dp^{d(d-1)/2}\frac{(p^n-1)\dots(p^n-p^{d-1})}{(p^d-1)\dots(p^d-p^{d-1})}=\delta(n)$$
    (par récurrence forte, cette égalité donne le résultat voulu)
    \\
    Notons $A_n$ le membre de gauche. Clairement $A_0=1$ (le produit est vide). On a, pour $n\geq 1$  :
    \begin{align*}
        A_n &= \sum_{d=0}^n (-1)^d p^{0}p^1\dots p^{d-1} \frac{(p^n-1)\dots(p^n-p^{d-1})}{(p^d-1)\dots(p^d-p^{d-1})} \\
        &= \sum_{d=0}^n (-1)^d  \frac{(p^n-1)\dots(p^n-p^{d-1})}{(p^d-1)\dots(p-1)} \\
        &= \frac 1 D \sum_{d=0}^n (-1)^d  (p^{d+1}-1)\dots (p^n-1)\times(p^n-1)\dots(p^n-p^{d-1}) \\
        &= \frac 1 D \sum_{d=0}^n \prod_{i=0}^{d-1} (p^i-p^n)  \prod_{i=d+1}^n (p^i-1) 
    \end{align*}
    où $D$ est le dénominateur commun $(p-1)\dots (p^n-1)$. À présent, montrons par récurrence que pour tout $k$ entre $0$ et $n$ :
    $$\sum_{d=0}^k \prod_{i=0}^{d-1} (p^i-p^n)  \prod_{i=d+1}^n (p^i-1)  =\prod_{i=1}^{k} (p^i-p^n)  \prod_{i=k+1}^n (p^i-1) $$
    Pour $k=0$ le résultat est clair. Supposons l'énoncé vrai au rang $k<n$ et montrons qu'il est encore vrai au rang $k+1$ :
    \begin{align*}
        \sum_{d=0}^{k+1} \prod_{i=0}^{d-1} (p^i-p^n)  \prod_{i=d+1}^n (p^i-1)  &=\prod_{i=1}^{k} (p^i-p^n)  \prod_{i=k+1}^n (p^i-1) + \prod_{i=0}^{k} (p^i-p^n)  \prod_{i=k+2}^n (p^i-1) \\
        &= \prod_{i=1}^{k} (p^i-p^n) \prod_{i=k+2}^n (p^i-1) \times \left( p^{k+1}-1 + 1-p^n\right) \\
        &= \prod_{i=1}^{k} (p^i-p^n) \prod_{i=k+2}^n (p^i-1) \times \left( p^{k+1}-p^n\right) \\
        &= \prod_{i=1}^{k+1} (p^i-p^n) \prod_{i=k+2}^n (p^i-1) 
  \end{align*}
    ce qui achève la récurrence. Au rang $k=n$ on obtient :
    $$A_n = \frac 1 D \prod_{i=1}^n(p^i-p^n)=0=\delta(n)$$
    car $n\geq 1$. 
\end{proof}

A priori, le calcul précédent ne suffit pas à obtenir $\mu(G)$ en général. Heureusement, pour tous les autres $p$-groupes, $\mu$ se révèle être nulle.

\begin{prop}
    \normalfont Soit $G$ un $p$-groupe abélien non élémentaire (cela signifie qu'il existe un élément d'ordre $p^k$ avec $k\geq 2$). On a :
    $$\mu(G)=0$$
\end{prop}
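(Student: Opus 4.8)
The plan is to prove the statement by strong induction on $\abs{G}$, using the recursive definition of $\mu$ as $\mu*1 = \delta$, i.e. for $G$ nontrivial, $\sum_{H\leq G}\mu(H) = 0$, which rearranges to $\mu(G) = -\sum_{H<G}\mu(H)$. Since $G$ is a non-elementary $p$-group, $\abs{G}=p^m$ for some $m\geq 1$, and $\mu$ is multiplicative so we may indeed restrict to $p$-groups. The key observation is that by the induction hypothesis, $\mu(H)=0$ for every proper subgroup $H<G$ \emph{unless} $H$ is elementary abelian, i.e. $H\cong\mbb Z_p^d$ for some $d$. Hence
$$\mu(G) = -\sum_{H<G}\mu(H) = -\sum_{d=0}^{\infty}\mu(\mbb Z_p^d)\cdot\#\{H\leq G: H\cong\mbb Z_p^d,\ H\neq G\}.$$
Since $G$ is non-elementary, $G\not\cong\mbb Z_p^d$, so the condition $H\neq G$ is automatic, and the sum is simply $\sum_d \mu(\mbb Z_p^d)\cdot\abs{\op{Sub}_{\mbb Z_p^d}(G)}$, using the Möbius values $\mu(\mbb Z_p^d)=(-1)^d p^{d(d-1)/2}$ from the previous proposition and the counting lemma of Section~2.

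So the whole statement reduces to the purely combinatorial identity: for any finite abelian $p$-group $G$ that is \emph{not} elementary abelian,
$$\sum_{d=0}^{\infty}(-1)^d p^{d(d-1)/2}\,\abs{\op{Sub}_{\mbb Z_p^d}(G)} = 0.$$
The count $\abs{\op{Sub}_{\mbb Z_p^d}(G)}$ is the number of $d$-dimensional subspaces of the socle-type structure; more usefully, every elementary abelian subgroup of $G$ of rank $d$ is contained in $G[p]=\{x\in G: px=0\}$, which is itself an $\mbb F_p$-vector space of some dimension $r\geq 1$ (and $r\geq 1$ always, while non-elementary means $G\neq G[p]$, though what we really need is just $r\geq 1$). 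Therefore $\abs{\op{Sub}_{\mbb Z_p^d}(G)}$ equals the Gaussian binomial $\binom{r}{d}_p$, the number of $d$-dimensional subspaces of $\mbb F_p^r$. The identity becomes
$$\sum_{d=0}^{r}(-1)^d p^{d(d-1)/2}\binom{r}{d}_p = 0,$$
which is exactly the $q$-binomial theorem evaluation $\prod_{i=0}^{r-1}(1-p^i\cdot 1)$ — wait, more precisely it is the specialization of $\prod_{k=0}^{r-1}(1+p^k x)=\sum_d p^{d(d-1)/2}\binom{r}{d}_p x^d$ at $x=-1$, giving $\prod_{k=0}^{r-1}(1-p^k)$, and this vanishes because the $k=0$ factor is $1-1=0$ (here $r\geq 1$). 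In fact this is precisely the identity $A_r = 0$ already proved in the previous proposition (with $n$ replaced by $r$), so we can cite it directly rather than reprove it.

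**Main obstacle.** The only real subtlety — and the step I'd be most careful about — is justifying that $\abs{\op{Sub}_{\mbb Z_p^d}(G)}$ depends only on $r=\dim_{\mbb F_p}G[p]$ and not on the finer structure of $G$: namely that \emph{every} subgroup of $G$ isomorphic to $\mbb Z_p^d$ lies inside $G[p]$ (true since every element of such a subgroup is killed by $p$), and conversely every $d$-dimensional subspace of $G[p]$ is such a subgroup. This is straightforward but must be stated. One should also note $G[p]$ is nontrivial, so $r\geq 1$, which is what makes the alternating sum vanish; interestingly the hypothesis "non-elementary" is used only to guarantee $G$ itself is not among the subgroups counted (so no "$H\neq G$" correction term is needed) — if $G$ \emph{were} elementary we'd recover the previous proposition's computation instead. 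Everything else is bookkeeping plus the already-established identity $A_r=0$.
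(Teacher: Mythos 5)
Your proof is correct and follows the same skeleton as the paper's: strong induction on $\abs{G}$, the relation $\mu(G)=-\sum_{H<G}\mu(H)$, and the key observation that the elementary abelian subgroups of $G$ are exactly the subgroups of the $p$-torsion $G[p]=G(p)$, which is nontrivial and proper since $G$ is not elementary. The only divergence is the last step. The paper recognizes the remaining sum as $\sum_{H\leq G(p)}\mu(H)=(\mu*1)(G(p))=\delta(G(p))=0$, which uses nothing beyond the defining relation $\mu*1=\delta$ and $G(p)\neq 1$; in particular the paper's proof of this proposition is logically independent of the explicit formula $\mu(\mbb Z_p^d)=(-1)^d p^{d(d-1)/2}$. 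You instead expand that same sum using these explicit values together with the Gaussian-binomial count of rank-$d$ subgroups of $G[p]$, and reduce to the identity $A_r=0$ ($r=\dim_{\mbb F_p}G[p]\geq 1$) established in the previous proposition. This is the same computation in unwound form: the identity $A_r=0$ is precisely the statement $(\mu*1)(\mbb Z_p^r)=\delta(\mbb Z_p^r)=0$, so your finish re-cites at the level of $G[p]\cong\mbb Z_p^r$ what the paper obtains in one line from the convolution identity. Both are valid, and your care about why every subgroup isomorphic to $\mbb Z_p^d$ sits inside $G[p]$ (and conversely) is exactly the point the paper also relies on; the paper's phrasing is simply slicker and keeps the result free of the explicit computation of $\mu$ on elementary groups, while yours makes the underlying $q$-combinatorics explicit.
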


\begin{proof}
    On le montre par récurrence forte sur $\abs G$. Supposons que c'est vrai pour tout groupe $p$-abélien non élémentaire de cardinal strictement inférieur à $\abs G$ (il n'y a pas besoin d'initialiser). On a alors :
    $$\mu(G)=-\sum_{H<G}\mu(H)$$
    Par  hypothèse de récurrence, seuls les sous-groupes élémentaires contribuent à cette somme. On note $G(p)$ le sous-groupe de $p$-torsion de $G$, et on a donc :
    $$\mu(G)=-\sum_{H\leq G(p)}\mu(H)$$
    car $G(p)<G$ puisque $G$ n'est pas élémentaire. Au total :
    $$\mu(G)=-\mu*1(G(p))=-\delta(G(p))=0$$
    puisque $G(p)\neq 0$.
\end{proof}

On peut résumer ces deux observations ainsi :

\begin{theorem}
    \normalfont
    Si $G$ est produit de $p$-groupes élémentaires, on note $\dim_p G$ la puissance à laquelle apparaît $\mbb Z_p$ dans la décomposition de $G$ en produit de $p$-groupes élémentaires, et on a :
    $$\boxed{\mu(G)=\prod_{p\in \mbb P} (-1)^{\dim_p G}p^{\frac{\dim_p G (\dim_p G -1)}2}}$$
    avec $\mathbb P$ l'ensemble des nombres premiers. 
    \\
    Dans le cas contraire, $\mu(G)=0$.
\end{theorem}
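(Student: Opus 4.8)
The strategy is to reduce the general statement to the two propositions already established, using multiplicativity of $\mu$ together with the primary decomposition. First I would recall that every finite abelian group $G$ decomposes canonically as a product $G = \prod_{p\in\mbb P} G(p)$ of its $p$-primary components, where all but finitely many $G(p)$ are trivial, and that the orders of distinct $G(p)$ are pairwise coprime. Since $\mu$ is multiplicative (it is the $*$-inverse of the multiplicative function $1$, and $\mbb M$ is a group, as shown above), we get
$$\mu(G)=\prod_{p\in\mbb P}\mu(G(p)),$$
the product being finite because $\mu(1)=1$. So it suffices to compute $\mu$ on a $p$-group.

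Next I would case-split on the structure of each $p$-group $G(p)$. If $G(p)$ is elementary abelian, i.e. $G(p)\cong\mbb Z_p^{\,d}$ with $d=\dim_p G$, then the Proposition on $\mbb F_p$-vector spaces gives $\mu(G(p))=(-1)^d p^{d(d-1)/2}$. If instead $G(p)$ is not elementary — equivalently it has an element of order $p^k$ with $k\ge 2$ — then the second Proposition gives $\mu(G(p))=0$, whence $\mu(G)=0$ by the product formula. Thus $\mu(G)\ne 0$ forces every primary component to be elementary, which is exactly the hypothesis ``$G$ is a product of elementary $p$-groups'', and in that case substituting the first case into the product formula yields
$$\mu(G)=\prod_{p\in\mbb P}(-1)^{\dim_p G}\,p^{\frac{\dim_p G(\dim_p G-1)}{2}},$$
as claimed; in the contrary case $\mu(G)=0$.

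Honestly, there is no real obstacle here: the theorem is a clean corollary, and the only points requiring a word of care are (i) noting that the infinite product over $\mbb P$ is in fact finite, so the formula makes sense — this holds because $\dim_p G = 0$ for all $p$ not dividing $|G|$, and the corresponding factor is $(-1)^0 p^0 = 1$ — and (ii) making sure the notion of ``product of elementary $p$-groups'' coincides with ``every primary component is elementary abelian,'' which is immediate from the primary decomposition. I would write the whole argument in three or four lines, citing the two preceding propositions and the multiplicativity of $\mu$.
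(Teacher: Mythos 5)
Your argument is correct and follows exactly the paper's route: decompose $G$ into its $p$-primary components, invoke the multiplicativity of $\mu$, and apply the two preceding propositions (the elementary case and the vanishing in the non-elementary case). The paper's own proof is just a one-line version of this; your writeup merely spells out the same reduction in more detail.
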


\begin{proof}
    On l'obtient directement avec la multiplicativité de $\mu$ et la décomposition en $p$-Sylows : $G\cong \bigoplus_{p\in \mathbb P}\bigoplus_{k\geq 1}\left(\mbb Z_p^k\right)^{n_{p,k}}$.
\end{proof}

\begin{defin}
    \normalfont
    On dira que $G$ est \textbf{élémentaire} s'il est produit (fini) de $p$-groupes élémentaires. Les groupes élémentaires sont exactement les groupes qui ont une valeur de $\mu$ non nulle. Le sous-ensemble de $\mbb G$ des groupes élémentaires est alors naturellement en bijection avec $\nat^*$, via $G\mapsto \abs{G}$ (deux groupes élémentaires sont isomorphes si et seulement si ils ont même cardinal). Cet ensemble est aussi stable par produit, sous-groupe et quotient, (tout comme le sous-ensemble de $\mbb G$ constitué des groupes cycliques, eux aussi entièrement déterminés par leur cardinal). Ainsi, pour un groupe élémentaire $G$, $\mu(G)$ ne dépend que du cardinal de $G$.
\end{defin}

\subsection{Applications}

Étant donnés deux groupes abéliens finis $A$ et $B$, on note $\op{Mono}(A,B)$ l'ensemble des morphismes injectifs de $A$ dans $B$ et $\op{Epi}(A,B)$ l'ensemble des morphismes surjectifs de $A$ dans $B$ (ces notions coïncident avec les notions de monomorphismes et épimorphismes dans la catégorie des groupes abéliens finis). La catégorie des groupes abéliens finis étant équivalente à sa duale, il y a autant de morphismes de $A$ vers $B$ que de morphismes de $B$ vers $A$, et les quantités $\abs{\op{Mono}(A,B)}$ et $\abs{\op{Epi}(B,A)}$ sont égales. 

\begin{prop}
    \normalfont On dispose des relations suivantes :
    $$\abs{\op{Hom}(A,B)}=\abs{\op{Hom}(B,A)}=\sum_{H\leq A}\abs{\op{Mono}\left(\frac A H,B\right)}=\sum_{H\leq B}\abs{\op{Epi}(A,H)}$$
    Par commutativité de $*$, on peut aussi écrire ça $\sum_{H\leq A}\abs{\op{Mono}\left(H,B\right)}$. \\
    Par la formule $\mu*1=\delta$ on en déduit immédiatement :
    $$\abs{\op{Mono}(A,B)}=\abs{\op{Epi}(B,A)} = \sum_{H\leq A}\mu(A/H)\abs{\op{Hom}(H,B)}=\sum_{H\leq B}\mu(B/H)\abs{\op{Hom}(A,H)}$$
\end{prop}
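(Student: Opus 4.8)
Le plan est le suivant. L'égalité $\abs{\op{Hom}(A,B)}=\abs{\op{Hom}(B,A)}$ est déjà acquise : l'anti-équivalence de dualité fournit une bijection $\op{Hom}(A,B)\arw{\sim}\op{Hom}(\dual B,\dual A)$, et $\dual A\cong A$, $\dual B\cong B$. L'idée principale pour les deux autres égalités est de partitionner l'ensemble $\op{Hom}(A,B)$ de deux manières différentes. D'une part selon le noyau : par la propriété universelle du quotient, tout $f:A\longrightarrow B$ se factorise de façon unique en $A\twoheadrightarrow A/\op{Ker}f\hookrightarrow B$, donc les morphismes de noyau $H$ fixé sont en bijection canonique avec $\op{Mono}(A/H,B)$, d'où $\abs{\op{Hom}(A,B)}=\sum_{H\leq A}\abs{\op{Mono}(A/H,B)}$. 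D'autre part selon l'image : les morphismes $f:A\longrightarrow B$ d'image $H\leq B$ fixée sont exactement les éléments de $\op{Epi}(A,H)$, d'où $\abs{\op{Hom}(A,B)}=\sum_{H\leq B}\abs{\op{Epi}(A,H)}$.

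Pour la reformulation $\sum_{H\leq A}\abs{\op{Mono}(H,B)}$, je fixerais une division $\Gamma$ de $A$ : comme $\Gamma(H)\cong A/H$ et que $\Gamma$ est une bijection de $\mc S(A)$ sur lui-même, le changement de variable $K=\Gamma(H)$ transforme $\sum_{H\leq A}\abs{\op{Mono}(A/H,B)}$ en $\sum_{K\leq A}\abs{\op{Mono}(K,B)}$. De manière équivalente, en notant $m_B=\abs{\op{Mono}(\bullet,B)}$ et en remarquant que $m_B$ et $\abs{\op{Hom}(\bullet,B)}$ sont bien des fonctions abéliennes (elles ne dépendent que de la classe d'isomorphisme du premier argument), l'égalité précédente se lit exactement $\abs{\op{Hom}(\bullet,B)}=m_B*1$ ; c'est cette écriture qui rend transparente l'invocation de la commutativité de $*$.

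La dernière double égalité s'obtient alors par inversion de Möbius : $m_B=\mu*(m_B*1)=\mu*\abs{\op{Hom}(\bullet,B)}$, ce qui donne $\abs{\op{Mono}(A,B)}=\sum_{H\leq A}\mu(H)\abs{\op{Hom}(A/H,B)}$, puis, par une nouvelle réindexation via une division (en substituant $H=\Gamma^{-1}(K)$, de sorte que $\mu(H)=\mu(A/K)$ et $A/H\cong K$), $=\sum_{H\leq A}\mu(A/H)\abs{\op{Hom}(H,B)}$. L'égalité $\abs{\op{Mono}(A,B)}=\abs{\op{Epi}(B,A)}$ — déjà vue via la dualité — combinée au même raisonnement appliqué à $\op{Hom}(B,A)$ (ou directement à la dualité) fournit la version indexée par $H\leq B$.

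La seule étape demandant un peu de soin dans la rédaction est le jeu de réindexations par une division : il faut bien distinguer les isomorphismes $\Gamma(H)\cong A/H$ et $\Gamma^{-1}(H)\cong A/H$ pour ne pas confondre $\mu(H)$ avec $\mu(A/H)$ dans le terme de Möbius ; tout le reste (factorisation canonique, bijections de partition, application de $\mu*1=\delta$) est purement routinier.
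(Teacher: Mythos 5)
Votre démarche est pour l'essentiel celle du texte : dénombrement de $\op{Hom}(A,B)$ en classant les morphismes selon leur noyau (propriété universelle du quotient, d'où la somme des $\abs{\op{Mono}(A/H,B)}$) puis selon leur image (d'où la somme des $\abs{\op{Epi}(A,H)}$), égalité $\abs{\op{Hom}(A,B)}=\abs{\op{Hom}(B,A)}$ par dualité, puis lecture convolutive $\abs{\op{Hom}(\bullet,B)}=\abs{\op{Mono}(\bullet,B)}*1$ et inversion par $\mu$. Votre soin sur la réindexation par une division est bienvenu (le texte se contente d'invoquer la commutativité de $*$, qui repose exactement sur ce changement de variable), et tout est correct jusqu'à l'égalité $\abs{\op{Mono}(A,B)}=\abs{\op{Epi}(B,A)}=\sum_{H\leq A}\mu(A/H)\abs{\op{Hom}(H,B)}$.

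En revanche, votre dernière phrase recouvre une vraie lacune. Le \emph{même raisonnement appliqué à} $\op{Hom}(B,A)$ donne $\sum_{H\leq B}\mu(B/H)\abs{\op{Hom}(H,A)}=\abs{\op{Mono}(B,A)}$, c'est-à-dire, via $\abs{\op{Hom}(H,A)}=\abs{\op{Hom}(A,H)}$ et la dualité, $\sum_{H\leq B}\mu(B/H)\abs{\op{Hom}(A,H)}=\abs{\op{Mono}(B,A)}=\abs{\op{Epi}(A,B)}$ ; cela ne fournit donc pas l'égalité avec $\abs{\op{Mono}(A,B)}=\abs{\op{Epi}(B,A)}$ qu'affirme la chaîne, et aucun argument ne le peut, car elle est fausse en général : pour $A=\mbb Z_p$ et $B=\mbb Z_{p^2}$, on a $\abs{\op{Mono}(A,B)}=\abs{\op{Epi}(B,A)}=p-1$, tandis que $\sum_{H\leq B}\mu(B/H)\abs{\op{Hom}(A,H)}=0\cdot 1+(-1)\cdot p+1\cdot p=0=\abs{\op{Epi}(A,B)}$. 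Le quatrième membre de l'énoncé devrait donc se lire $\abs{\op{Epi}(A,B)}=\abs{\op{Mono}(B,A)}$ (ou bien il faut y échanger les rôles de $A$ et $B$) ; notez que la preuve du texte, qui expédie cette partie par un \emph{on en déduit immédiatement}, passe sous silence exactement le même point. À cette réserve près, votre proposition est complète et suit la même route que le texte.
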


\begin{proof}
    On dénombre les morphismes de $A$ vers $B$ en les classant selon leur noyau, qui peut être n'importe quel sous-groupe (distingué) de $A$ :

    $$\abs{\op{Hom}(A,B)}=\sum_{H\leq A}\abs{\{f\in \op{Hom}(A,B)\mid \op{Ker}f=H\}}=\sum_{H\leq A}\abs{\op{Mono}(A/H,B)}$$
    par propriété universelle du quotient. Pour la formule avec les épimorphismes, il s'agit cette fois de dénombrer les morphismes de $A$ vers $B$ en les classant selon leur image (ou selon leur conoyau).
\end{proof}

On en déduit une formule pour le nombre de sous-groupes de type donné (on dit qu'un sous-groupe $H$ de $A$ est de type $B$ s'il est isomorphe à $B$).

\begin{prop}
    \normalfont
    Soient $A,B$ deux groupes abéliens finis. Le nombre de sous-groupes de $A$ isomorphes à $B$ est :
    $$ \boxed{\abs{\op{Sub}_B(A)}=\frac{\displaystyle \sum_{H\leq B}{\mu(B)\abs{\op{Hom}(B/H,A)}}}{\displaystyle \sum_{H\leq B}{\mu(B)\abs{\op{Hom}(B/H,B)}}}}$$
\end{prop}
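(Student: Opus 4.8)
The plan is to read this off from the counting lemma expressing $\abs{\op{Sub}_B(A)}$ in terms of $\op{Mono}$ and $\op{Aut}$, together with the M\"obius inversion for $\abs{\op{Mono}}$ established in the previous proposition; the only real work is a change of index so that the sums run over $\mc S(B)$ in the form written in the box.

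First I would apply that lemma with the pair $(A,B)$ there replaced by $(B,A)$: since $\op{Aut}B$ acts freely on $\op{Mono}(B,A)$ by $f\cdot g=f\circ g$, with orbits the subgroups of $A$ isomorphic to $B$, one gets
$$\abs{\op{Sub}_B(A)}=\frac{\abs{\op{Mono}(B,A)}}{\abs{\op{Aut}B}}.$$
Next, the previous proposition, with $A$ and $B$ exchanged, gives
$$\abs{\op{Mono}(B,A)}=\sum_{H\leq B}\mu(B/H)\,\abs{\op{Hom}(H,A)}.$$
Specialising to $A=B$ and using that an endomorphism of a finite abelian group is injective if and only if it is an automorphism, so that $\op{Mono}(B,B)=\op{Aut}B$, yields the analogous formula for the denominator with $A$ replaced by $B$. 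Dividing the two identities already gives a closed expression for $\abs{\op{Sub}_B(A)}$.

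It then remains to rewrite $\sum_{H\leq B}\mu(B/H)\,\abs{\op{Hom}(H,C)}$ (for $C\in\{A,B\}$) in the shape appearing in the statement. For this I would fix a division $\Gamma$ of $B$ as introduced above and substitute $H=\Gamma^{-1}(L)$: the map $L\mapsto\Gamma^{-1}(L)$ is a bijection of $\mc S(B)$, and by the defining property of a division one has $\Gamma^{-1}(L)\cong B/L$ while $L=\Gamma(\Gamma^{-1}(L))\cong B/\Gamma^{-1}(L)$, so the general term $\mu(B/H)\,\abs{\op{Hom}(H,C)}$ turns into $\mu(L)\,\abs{\op{Hom}(B/L,C)}$. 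After this harmless reindexing both numerator and denominator take the shape $\sum_{H\leq B}\mu(H)\,\abs{\op{Hom}(B/H,-)}$, which is exactly the boxed fraction (the argument of $\mu$ being $H$). I do not expect any genuine obstacle: all the arithmetic content sits in the lemma and the proposition already proved, and the only point needing a little care is the bookkeeping with the division $\Gamma$.
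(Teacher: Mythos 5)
Your proof is correct and follows essentially the paper's own argument: the lemma $\abs{\op{Sub}_B(A)}=\abs{\op{Mono}(B,A)}/\abs{\op{Aut}B}$, the identification $\op{Aut}B=\op{Mono}(B,B)$ for $B$ finite, and the Möbius-inverted formula for $\abs{\op{Mono}(B,\cdot)}$ from the preceding proposition. Your final reindexing via a division, which puts $\mu$ at the subgroup $H$ rather than at $B$, is the right reading of the boxed formula (the $\mu(B)$ printed there is a typo for $\mu(H)$), so the extra bookkeeping you add is sound and even slightly more explicit than the paper's one-line conclusion.
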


\begin{proof}
    On utilise la formule générale :
    $$\abs{\op{Sub_B(A)}}=\frac{\abs{\op{Mono}(B,A)}}{\abs{\op{Aut}B}}$$

    Or, $B$ étant fini, on a naturellement $\op{Aut}B=\op{Mono}(B,B)$. Il ne reste plus qu'à appliquer les formules qui précèdent.
\end{proof}

\begin{remark}
    \normalfont D'après le calcul de $\mu$, si $B$ est un $p$-groupe, on peut restreindre les sommes aux sous-espaces vectoriels de $B(p)$ (la $p$-torsion de $B$). La formule est alors assez efficace si le groupe $B$ est suffisamment petit pour que l'on puisse calculer les quotients $B/H$ présents dans la formule pour tous les sous-espaces vectoriels $H$ de $B$. Le calcul du cardinal de $\Hom$ est aisé puisque $\abs{\Hom}$ est multiplicatif en chaque variable.
\end{remark}

On propose maintenant une démonstration du théorème de simplification des groupes finis (\textbf{dans le cas abélien seulement}) adaptée de \cite{td3}.

\begin{lem}
    \normalfont (Yoneda numérique) \\
    Soient $A,B$ deux groupes abéliens finis tels que pour tout $X$ un groupe abélien fini, on ait :
    $$\abs{\op{Hom}(A,X)}=\abs{\op{Hom}(B,X)}$$
    Alors $A$ et $B$ sont isomorphes. Ce lemme reste vrai pour des groupes finis non nécessairement commutatifs mais la convolution ne suffit plus à l'établir (voir \cite{lovasz} pour une démonstration dans ce cadre). \\\\
    De plus, il suffit que cette égalité soit vérifiée pour tout \textbf{groupe cyclique} $X$ (ou encore pour tout $p$-groupe, pour tout $p$ premier).
\end{lem}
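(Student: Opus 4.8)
The plan is to use the convolution machinery already set up in the paper, together with the equivalence of the category of finite abelian groups with its dual. The starting observation is that the function $X \mapsto \abs{\op{Hom}(A,X)}$ is, for fixed $A$, a fonction abélienne, and the hypothesis says that $A$ and $B$ induce the same such function. Via the duality $\abs{\op{Hom}(A,X)} = \abs{\op{Hom}(X,A)}$ (established in the excerpt), one has equivalently $\abs{\op{Hom}(X,A)} = \abs{\op{Hom}(X,B)}$ for all $X$. Either form will do; I would work with whichever makes the convolution identity cleanest.

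The key step is to pass from $\op{Hom}$-counts to $\op{Mono}$-counts (equivalently $\op{Epi}$-counts) by Möbius inversion. Using the proposition proved just above, $\abs{\op{Mono}(X,A)} = \sum_{H\leq X}\mu(X/H)\abs{\op{Hom}(H,A)}$, and the same with $B$ in place of $A$. Since the right-hand sides agree term by term under the hypothesis, I get $\abs{\op{Mono}(X,A)} = \abs{\op{Mono}(X,B)}$ for every finite abelian $X$. Now specialize $X = A$: the left side is $\abs{\op{Mono}(A,A)} = \abs{\op{Aut}A} \geq 1$, so $\abs{\op{Mono}(A,B)} \geq 1$, i.e. $A$ embeds in $B$; in particular $\abs{A}\leq \abs{B}$. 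Symmetrically, taking $X = B$ gives $\abs{B}\leq \abs{A}$, hence $\abs{A} = \abs{B}$. But an injective morphism between finite groups of the same cardinal is an isomorphism, so $A\cong B$. This is clean and I expect no obstacle here; the one point to be careful about is that $\op{Mono}(X,X) = \op{Aut}X$ for finite $X$, which is exactly the remark used in the previous proposition.

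For the strengthening (it suffices to check the equality on cyclic $X$, or on $p$-groups), the idea is that the function $N_A : X \mapsto \abs{\op{Hom}(A,X)}$ is \emph{multiplicative}: if $\abs{Y}\wedge\abs{Z}=1$ then $\op{Hom}(A, Y\times Z)\cong \op{Hom}(A,Y)\times\op{Hom}(A,Z)$ (a morphism into a product of groups of coprime orders splits as a product of morphisms, since the image of the $p$-part must land in the $p$-part). Hence $N_A$ is determined by its values on $p$-groups, and in fact a multiplicative fonction abélienne is determined by its restriction to cyclic groups composed with its values on $p$-groups — more precisely, knowing $N_A(Z)=N_B(Z)$ for all $p$-groups $Z$ (all primes $p$) forces $N_A=N_B$ everywhere by multiplicativity, and then the main argument applies. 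For the reduction to \emph{cyclic} $X$ one needs slightly more: observe that $\abs{\op{Hom}(A,\mbb Z_{p^k})}$ for all $k$ records the number of elements of each order in (a suitable quotient of) $A$, equivalently the isomorphism type of each $p$-Sylow of $A$; so agreement on all cyclic groups already pins down every Sylow subgroup of $A$ and of $B$, hence $A\cong B$ directly. The mild obstacle is making the last sentence precise: one must check that the data $\bigl(\abs{\op{Hom}(A,\mbb Z_{p^k})}\bigr)_{k\geq 1}$ determines the partition describing the $p$-Sylow of $A$ — this is a short computation with the formula $\abs{\op{Hom}(\mbb Z_{p^a},\mbb Z_{p^k})}=p^{\min(a,k)}$ and linearity, so I would either do it by hand or simply invoke the already-proven multiplicative reduction to $p$-groups and then the main case with $X$ ranging over $p$-groups.
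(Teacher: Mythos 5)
Your main argument is exactly the paper's: Möbius inversion turns the hypothesis on $\abs{\op{Hom}}$-counts into $\abs{\op{Mono}}$-counts, and the Yoneda-style specialisation to $X=A$ and $X=B$ gives embeddings in both directions, hence equal cardinals, hence an isomorphism. (The paper works directly with $\abs{\op{Mono}(A,X)}=\sum_{H\leq X}\mu(X/H)\abs{\op{Hom}(A,H)}$, so it does not even need your preliminary duality step $\abs{\op{Hom}(A,X)}=\abs{\op{Hom}(X,A)}$; your variant through $\op{Mono}(X,A)$ is equally valid.)

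Where you diverge is the reduction to cyclic or $p$-group test objects, and there you make it harder than it is. The paper's argument is one line: by the universal property of the product, $\abs{\op{Hom}(A,X\times Y)}=\abs{\op{Hom}(A,X)}\cdot\abs{\op{Hom}(A,Y)}$ for \emph{arbitrary} $X,Y$ --- no coprimality is needed --- and every finite abelian group is a product of cyclic groups (a fortiori of $p$-groups), so agreement on cyclic $X$ already forces agreement on all $X$, after which the main argument applies verbatim. By restricting the splitting to factors of coprime order you only reach $p$-groups, which is why the cyclic case costs you extra work. Your primary route for that case (using $\abs{\op{Hom}(\mbb Z_{p^a},\mbb Z_{p^k})}=p^{\min(a,k)}$ and checking that the numbers $\sum_i\min(a_i,k)$, $k\geq 1$, determine the partition $(a_i)$ of each $p$-Sylow) is correct and completable, though it re-proves the statement in that case rather than reducing to it. Your fallback, however --- invoking the multiplicative reduction to $p$-groups and then the main case with $X$ ranging over $p$-groups --- does not work as stated: multiplicativity only splits Hom-counts over factors of coprime order, so agreement on cyclic groups does not propagate to non-cyclic $p$-groups that way. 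Drop the coprimality restriction and use the categorical product splitting; both reductions then become immediate.
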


\begin{proof}
    Constatons d'abord que, pour tout groupe abélien fini $X$, on a $\abs{\op{Mono}(A,X)}=\abs{\op{Mono}(B,X)}$. Il suffit pour cela d'utiliser la formule :
    $$\abs{\op{Mono}(A,X)}=\sum_{H\leq X}\mu(X/H)\abs{\op{Hom}(A,H)}$$
    et d'utiliser l'hypothèse du lemme pour remplacer le $A$ par un $B$ dans la formule. Comme pour le lemme de Yoneda, on applique cette relation à un $A$ et à $B$ : $\op{Mono}(A,A)$ n'est pas vide donc $\op{Mono}(B,A)$ n'est pas vide, et réciproquement $\op{Mono}(A,B)$ n'est pas vide. Puisque ce sont des groupes finis, on en déduit successivement $\abs{B}\leq \abs{A}$ et $\abs{A}\leq \abs{B}$ donc $A$ et $B$ ont même cardinal, or il existe un sous-groupe de $A$ isomorphe à $B$, et par cardinalité ce sous-groupe est $A$. $A$ et $B$ sont donc isomorphes. Il suffit de vérifier cela pour tout groupe cyclique ou pour tout $p$-groupe puisqu'un groupe abélien fini est produit de tels groupes (et en utilisant la propriété universelle du produit).
\end{proof}

\begin{theorem}     \normalfont (Simplification des Groupes Abéliens Finis) \\
    Si $A,B,C$ sont trois groupes abéliens finis vérifiant $A\times B \cong A\times C$, alors $B$ et $C$ sont isomorphes.
\end{theorem}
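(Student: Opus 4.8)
\section*{Proof proposal}

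The plan is to reduce the cancellation statement to the numerical Yoneda lemma proved just above. The key structural fact is that $\Hom(-,X)$ turns products into products: for any finite abelian group $X$ there is a natural bijection $\Hom(A\times B,X)\cong \Hom(A,X)\times\Hom(B,X)$, obtained by restricting a morphism to the two factors (and conversely combining two morphisms componentwise, using that $X$ is abelian so the two images commute). Hence $\abs{\Hom(A\times B,X)}=\abs{\Hom(A,X)}\cdot\abs{\Hom(B,X)}$, and likewise $\abs{\Hom(A\times C,X)}=\abs{\Hom(A,X)}\cdot\abs{\Hom(C,X)}$.

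Next I would use the hypothesis $A\times B\cong A\times C$: isomorphic groups have the same number of morphisms into any fixed $X$, so
$$\abs{\Hom(A,X)}\cdot\abs{\Hom(B,X)}=\abs{\Hom(A,X)}\cdot\abs{\Hom(C,X)}$$
for every finite abelian group $X$. Since the zero morphism always lies in $\Hom(A,X)$, the factor $\abs{\Hom(A,X)}$ is a nonzero integer, so it may be cancelled, yielding $\abs{\Hom(B,X)}=\abs{\Hom(C,X)}$ for all $X$. Applying the numerical Yoneda lemma (Lemme ``Yoneda numérique'') then gives $B\cong C$, which is exactly the claim.

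There is no serious obstacle in this argument; the only point requiring the slightest care is observing that $\abs{\Hom(A,X)}\neq 0$ so that the cancellation of integers is legitimate — this is where finiteness of the groups and the existence of the trivial morphism are used, and it is precisely the step that makes the multiplicative relation into a genuine cancellation. Everything else (the product decomposition of $\Hom$, invariance of $\abs{\Hom(-,X)}$ under isomorphism) is routine. Note also that by the final remark of the Yoneda lemma it would suffice to check the equality $\abs{\Hom(B,X)}=\abs{\Hom(C,X)}$ for $X$ ranging over cyclic groups, or over $p$-groups, which could be convenient if one wanted an effective version.
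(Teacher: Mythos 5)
Your proposal is correct and follows essentially the same route as the paper: decompose $\abs{\Hom(A\times B,X)}$ as $\abs{\Hom(A,X)}\cdot\abs{\Hom(B,X)}$ (the paper invokes the universal property of the coproduct, which is exactly your componentwise bijection), cancel the nonzero factor $\abs{\Hom(A,X)}$, and conclude by the numerical Yoneda lemma. No gaps to report.
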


\begin{proof}
    On utilise la propriété universelle du coproduit dans la catégorie des groupes abéliens (finis) :
    \\
    Soit $X$ un groupe abélien fini quelconque, on a :
    $$\abs{\op{Hom}(A,X)}\times \abs{\op{Hom}(B,X)}=\abs{\op{Hom}(A\times B,X)}=\abs{\op{Hom}(A\times C,X)}=\abs{\op{Hom}(A,X)}\times \abs{\op{Hom}(C,X)}$$
    Aucun de ces facteurs n'est nul, donc on obtient :
    $$\abs{\op{Hom}(B,X)}=\abs{\op{Hom}(C,X)}$$
    et on conclut par le lemme de Yoneda numérique : $B$ et $C$ sont isomorphes.
\end{proof}

Voici une autre conséquence intéressante :

\begin{theorem}
    \normalfont Soient $A$ et $B$ deux groupes abéliens finis. Si pour tout $d\in \nat^*$, $A$ et $B$ ont autant d'éléments d'ordre $d$, alors ils sont isomorphes.
\end{theorem}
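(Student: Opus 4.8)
The plan is to reduce everything to the ``Yoneda numérique'' lemma, by translating the hypothesis on orders of elements into an equality of $\abs{\op{Hom}(A,X)}$ for $X$ ranging over cyclic groups.

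First I would record the elementary fact that, for a finite abelian group $G$ and $d\in\nat^*$, the number of $x\in G$ with $dx=0$ equals $\abs{\op{Hom}(\mbb Z/d\mbb Z,G)}$: a morphism out of $\mbb Z/d\mbb Z$ is determined by the image of $\bar 1$, which can be any element annihilated by $d$. Call this number $s_d(G)$, and let $n_d(G)$ be the number of elements of exact order $d$ in $G$, so that $s_d(G)=\sum_{e\mid d}n_e(G)$. By Möbius inversion over the (finite) divisor lattice of $d$, the family $(n_d(G))_{d\geq 1}$ and the family $(s_d(G))_{d\geq 1}$ determine each other. Hence the hypothesis that $A$ and $B$ have the same number of elements of each order $d$ is equivalent to $s_d(A)=s_d(B)$ for all $d$, i.e. to $\abs{\op{Hom}(\mbb Z/d\mbb Z,A)}=\abs{\op{Hom}(\mbb Z/d\mbb Z,B)}$ for all $d\in\nat^*$.

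Next I would use the self-duality of the category of finite abelian groups recalled earlier, namely that for finite abelian groups $M,N$ one has $\abs{\op{Hom}(M,N)}=\abs{\op{Hom}(N,M)}$. This turns the previous equalities into $\abs{\op{Hom}(A,\mbb Z/d\mbb Z)}=\abs{\op{Hom}(B,\mbb Z/d\mbb Z)}$ for every $d\in\nat^*$, that is, $\abs{\op{Hom}(A,X)}=\abs{\op{Hom}(B,X)}$ for every cyclic group $X$. The ``Yoneda numérique'' lemma, in its strengthened form where it suffices to test against cyclic groups, then gives $A\cong B$ directly.

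The whole argument is essentially a dictionary translation using results already proved, so there is no real obstacle; the only point needing (minor) care is the equivalence between ``same number of elements of exact order $d$'' and ``same number of solutions of $dx=0$'', which is the standard inclusion–exclusion / Möbius inversion over divisors and relies on the groups being finite. Everything else is immediate.
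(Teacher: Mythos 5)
Your proposal is correct and follows essentially the same route as the paper: translate the counting hypothesis into $\abs{\op{Hom}(\mbb Z/d\mbb Z,A)}=\abs{\op{Hom}(\mbb Z/d\mbb Z,B)}$ for all $d$ and conclude by the cyclic-testing form of the ``Yoneda numérique'' lemma. The only (harmless) cosmetic differences are that you invert over the divisor lattice of $d$ where the paper convolves over the subgroups of $\mbb Z/d\mbb Z$ (the same computation, since subgroups and quotients of a cyclic group correspond to divisors), and that you make explicit the flip $\abs{\op{Hom}(\mbb Z/d\mbb Z,A)}=\abs{\op{Hom}(A,\mbb Z/d\mbb Z)}$ which the paper leaves implicit via its earlier self-duality remark.
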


\begin{proof}
    L'hypothèse se traduit en :
    $$\forall d\in \nat^*\ \abs{\op{Mono}(\mbb Z/d\mbb Z,A)}=\abs{\op{Mono}(\mbb Z/d\mbb Z,B)}$$
    Par convolution (et parce que les sous-groupes des groupes cycliques sont cycliques) on obtient :
    $$\forall d\in \nat^*\ \abs{\op{Hom}(\mbb Z/d\mbb Z,A)}=\abs{\op{Hom}(\mbb Z/d\mbb Z,B)}$$
    On conclut alors par lemme de Yoneda numérique.
\end{proof}

\begin{conjecture}
    \normalfont Soient $A$ et $B$ deux groupes abéliens finis. Si pour tout $d\in \nat^*$, $A$ et $B$ ont autant de sous-groupes d'ordre $d$, alors ils sont isomorphes.
\end{conjecture}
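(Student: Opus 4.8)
The plan is to reduce the statement to $p$-groups and then to recover the invariant factors of a $p$-group from its subgroup counts one ``layer'' at a time.

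\textbf{Reduction to $p$-groups.} For a finite abelian group $A$ write $s_A(d)$ for the number of subgroups of $A$ of order $d$. Every subgroup of $A$ of order $p^k$ is contained in the Sylow $p$-subgroup $A_p$ (in fact in $A_p[p^k]$), so $s_A(p^k)=s_{A_p}(p^k)$. Hence the hypothesis gives $s_{A_p}(p^k)=s_{B_p}(p^k)$ for every prime $p$ and every $k\ge 0$, and if we can conclude $A_p\cong B_p$ for all $p$ then $A\cong B$ by the Sylow decomposition. (By the coprime-product lemma the function $s$ is in fact multiplicative in the obvious sense, so nothing is lost.) So assume from now on that $A$ and $B$ are $p$-groups.

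\textbf{The $p$-group case.} Let $A$ have type $\lambda=(\lambda_1\ge\lambda_2\ge\cdots)$ with conjugate partition $\lambda'$; then $\abs{A[p^k]}=p^{\lambda'_1+\cdots+\lambda'_k}$, so knowing all the $\lambda'_k$ is the same as knowing $A$ up to isomorphism (equivalently, recovering $\abs{A[p^k]}=\abs{\op{Hom}(\mbb Z/p^k\mbb Z,A)}$ for all $k$ lets the numerical Yoneda lemma finish). Since a subgroup of order $p^k$ lies inside $A[p^k]$, and $A[p^k]$ has type whose conjugate is $(\lambda'_1,\dots,\lambda'_k)$, the integer $s_A(p^k)$ depends only on $\lambda'_1,\dots,\lambda'_k$. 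I would prove by induction on $k$ that, once $\lambda'_1,\dots,\lambda'_{k-1}$ are pinned down, the value $s_A(p^k)$ pins down $\lambda'_k\in\{0,1,\dots,\lambda'_{k-1}\}$. The first cases are reassuring: $s_A(p)=1+p+\cdots+p^{\lambda'_1-1}$ determines $\lambda'_1$, and $s_A(p^2)=\binom{\lambda'_1}{2}_p+p^{\lambda'_1-1}\bigl(1+p+\cdots+p^{\lambda'_2-1}\bigr)$ determines $\lambda'_2$ given $\lambda'_1$. For general $k$ I would expand $s_A(p^k)=\sum_{C}\abs{\op{Sub}_C(A)}$ over the isomorphism types $C$ of order $p^k$, using $\abs{\op{Sub}_C(A)}=\frac1{\abs{\op{Aut}C}}\sum_{H\le C}\mu(C/H)\abs{\op{Hom}(H,A)}$ (or, equivalently, the classical Birkhoff--Butler polynomial counting subgroups of type $C$, in which the type-$\mu$ summand is a polynomial in $p$ of degree $\sum_i \mu'_i(\lambda'_i-\mu'_i)$), isolate the part that genuinely involves $\lambda'_k$, and show it is strictly monotone in $\lambda'_k$, for instance by tracking the leading power of $p$.

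\textbf{The main obstacle.} The delicate step is exactly this last monotonicity/injectivity claim for arbitrary $k$: one must control how $s_A(p^k)$ moves with $\lambda'_k$ while $\lambda'_1,\dots,\lambda'_{k-1}$, hence a whole zoo of competing subgroup types of order $p^k$, are frozen. Via the degree formula above this becomes a combinatorial optimization --- maximizing $\sum_i m_i(\lambda'_i-m_i)$ over partitions $m=(m_1\ge m_2\ge\cdots)$ with $\sum_i m_i=k$ and $m_i\le\lambda'_i$ --- together with a sign check on the coefficient of the surviving $\lambda'_k$-term. I expect this, and not the reduction to $p$-groups nor the closing appeal to Yoneda, to be where the genuine difficulty sits; a possibly cleaner route would be to find a closed-form identity for the generating polynomial $\sum_k s_A(p^k)\,t^k$ in terms of $\lambda'$ (generalising $\prod_{i=0}^{n-1}(1+p^i t)$ for elementary groups) that makes the inversion transparent, or to show directly that each $\abs{A[p^k]}$ is a $\mbb Z$-linear combination of $s_A(p^0),\dots,s_A(p^k)$ whose leading coefficient is a unit.
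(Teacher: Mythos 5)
First, note that the paper offers no proof of this statement at all: it is stated there as an open conjecture, so there is nothing of the paper's to compare your argument with, and your text must be judged on its own. As it stands it is not a complete proof. The sound parts are the reduction to $p$-groups (a subgroup of order $p^k$ is contained in the Sylow $p$-subgroup, indeed in $A[p^k]$) and the observation that $s_A(p^k)$ depends only on $\lambda'_1,\dots,\lambda'_k$, since $A[p^k]$ has type with conjugate $(\lambda'_1,\dots,\lambda'_k)$. But the decisive claim --- that for frozen $\lambda'_1,\dots,\lambda'_{k-1}$ the value $s_A(p^k)$ determines $\lambda'_k$ --- is exactly what you defer to a ``combinatorial optimization'' on degrees of Hall--Butler polynomials and explicitly flag as unresolved. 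Once the (easy) reduction is made, that injectivity \emph{is} the conjecture, so the proposal has a genuine gap precisely where a proof is required.

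The good news is that this gap closes with a much softer argument than the degree-tracking you envisage, so your plan can be completed. Fix $a_1\ge\dots\ge a_{k-1}\ge m'>m\ge 0$ and let $B$, $B'$ be abelian $p$-groups whose types have conjugates $(a_1,\dots,a_{k-1},m)$ and $(a_1,\dots,a_{k-1},m')$. Containment of Young diagrams is preserved by transposition, so the type of $B$ is contained part by part in that of $B'$, and $B$ embeds into $B'$ componentwise; such an embedding injects the order-$p^k$ subgroups of $B$ into those of $B'$ and sends non-cyclic ones to non-cyclic ones. The cyclic ones can be counted exactly: with $S=a_1+\cdots+a_{k-1}$, the elements of order exactly $p^k$ number $p^{S+m}-p^{S}$ in $B$ and $p^{S+m'}-p^{S}$ in $B'$, and each lies in a unique cyclic subgroup of order $p^k$, which contains $p^k-p^{k-1}$ of them; so the cyclic count strictly increases with the last conjugate part. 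Hence $s_{B'}(p^k)\ge(\text{non-cyclic count of }B)+(\text{cyclic count of }B')>s_B(p^k)$, i.e.\ $\lambda'_k\mapsto s_A(p^k)$ is strictly increasing once $\lambda'_1,\dots,\lambda'_{k-1}$ are fixed. Your induction on $k$ then recovers $\lambda'$ prime by prime (only prime-power orders $d$ are used), and the conjecture becomes a theorem. As submitted, though, the proof is incomplete; replace the projected optimization step by this monotonicity argument.
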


\section{Dénombrement par les actions de groupes}

Dans cette section, on va démontrer le théorème suivant concernant la fonction abélienne $N_t=\mu * t^{\abs{\bullet}}$.
\begin{theorem} \normalfont
    Pour tout $G\in \mbb G$, $N_t(G)$ est divisible par le cardinal de $G$. En particulier, le \textbf{nombre de parties génératrices de $G$} est divisible par $\abs{G}$, puisque c'est $N_2$. 
\end{theorem}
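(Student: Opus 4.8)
The plan is to exhibit a combinatorial model for $N_t(G)$ carrying a free action of $G$. Fix a set $T$ with $\abs T=t$, and let $G$ act on the set $T^G$ of functions $f\colon G\to T$ through its regular action on itself: $(g\cdot f)(x)=f(x-g)$. The stabiliser of $f$ under this action is its \emph{groupe des périodes}
$$\op{Per}(f)=\{g\in G:\ f(x+g)=f(x)\ \text{pour tout }x\in G\},$$
un sous-groupe de $G$; comme $G$ est abélien, on a $\op{Per}(g\cdot f)=\op{Per}(f)$ pour tout $g$, donc chaque niveau $\{f\in T^G:\op{Per}(f)=H\}$ est une réunion de $G$-orbites.

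First I would check that the abelian function $a$ defined by $a(G)=\abs{\{f\in T^G:\op{Per}(f)=0\}}$ coincides with $N_t$. One classifies the elements of $T^G$ by their groupe des périodes: a function $f$ with $\op{Per}(f)=H$ factors uniquely as $\bar f\circ\pi$, where $\pi\colon G\to G/H$ is the projection and $\bar f\colon G/H\to T$ satisfies $\op{Per}(\bar f)=0$ (si $\bar g+H$ était une période de $\bar f$, alors $g$ serait une période de $f$); réciproquement, tout tel $\bar f$ se relève en une fonction sur $G$ de groupe des périodes exactement $H$. This yields a bijection between $\{f\in T^G:\op{Per}(f)=H\}$ and $\{\bar f\in T^{G/H}:\op{Per}(\bar f)=0\}$, hence $t^{\abs G}=\abs{T^G}=\sum_{H\leq G}a(G/H)$. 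Comme $a$ est invariante à isomorphisme près, le membre de droite vaut $(1*a)(G)=(a*1)(G)$, donc $a*1=t^{\abs\bullet}$; en convolant par $\mu$ (rappelons $\mu*1=\delta$) on obtient $a=\mu*t^{\abs\bullet}=N_t$.

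It then only remains to observe that $G$ acts freely on $S=\{f\in T^G:\op{Per}(f)=0\}$, since the stabiliser of each $f\in S$ is $\op{Per}(f)=0$, and $S$ is $G$-stable by the first paragraph. Therefore $S$ is a disjoint union of $G$-orbits, each of cardinality $\abs G$, so $\abs G$ divides $\abs S=a(G)=N_t(G)$, which is the claim; the case $t=2$ recovers the divisibility of the number of generating subsets of $G$ by $\abs G$. I do not anticipate a genuine obstacle: the only delicate point is the identification $a=N_t$, i.e.\ recognising the partition-by-period identity $\sum_{H\leq G}a(G/H)=t^{\abs G}$ as the convolution relation $a*1=t^{\abs\bullet}$ (which uses that $a$ is well defined on $\mbb G$ and the commutativity of $*$), after which Möbius inversion and the free-orbit count are immediate.
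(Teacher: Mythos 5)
Votre preuve est correcte et suit essentiellement la même démarche que l'article : votre ensemble $\{f\in T^G:\operatorname{Per}(f)=0\}$ est exactement l'ensemble des éléments libres de $\mathcal F(G,X)$ pour l'action régulière (votre $a$ est le $L_t$ du texte), la factorisation par le groupe des périodes correspond à l'isomorphisme $\mathcal F(G,X)^H\cong \mathcal F(G/H,X)$ avec compatibilité des stabilisateurs, et l'identité $\sum_{H\leq G}a(G/H)=t^{\abs G}$ suivie de la convolution par $\mu$ est précisément l'argument du texte. Rien à redire.
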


Notons que pour un groupe cyclique, on obtient que $\sum_{d\mid n}\mu(d)t^{n/d}$ est divisible par $n$, résultat que l'on peut obtenir (pour $t$ une puissance de $p$) par un argument de dénombrement des polynômes irréductibles unitaires de degré $d$ dans $\mbb F_t$. 

\subsection{Actions libres}
Soit $G$ un groupe et $X$ un $G$-ensemble. On dit qu'un élément de $X$ est \textbf{libre} si son stabilisateur est trivial. On note $L(X)$ l'ensemble des éléments libres de $X$. On dit que $X$ est \textbf{libre} si tous ses éléments sont libres. \\

Remarquons que $L(X)$ est stable par l'action de $G$. C'est donc naturellement un $G$-ensemble libre. On dispose de la propriété arithmétique suivante :

\begin{prop}
\normalfont
    Si $X$ est un $G$-ensemble libre fini, alors $G$ est fini et le cardinal de $G$ divise le cardinal de $X$. Le quotient $\abs{X}/\abs{G}$ est le nombre d'orbites de $X$.
\end{prop}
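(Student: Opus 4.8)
L'idée est d'invoquer le théorème orbite-stabilisateur, en prenant garde à ne pas supposer $G$ fini avant de l'avoir établi. On peut d'emblée écarter le cas dégénéré $X=\emptyset$ (aucune orbite, rien à démontrer) et supposer $X$ non vide. Je commencerais alors par fixer un élément $x\in X$ et considérer l'application orbitale $\varphi_x : G\longrightarrow X$, $g\mapsto g\cdot x$, dont l'image est l'orbite $G\cdot x$. Le point clé est que $\varphi_x$ est injective : si $g\cdot x = h\cdot x$, alors $h^{-1}g$ fixe $x$, donc appartient au stabilisateur de $x$, trivial par hypothèse de liberté, d'où $g=h$. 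Ainsi $\varphi_x$ réalise une bijection de $G$ sur $G\cdot x$, qui est un sous-ensemble fini de $X$ ; on en déduit que $G$ est fini, avec $\abs{G}=\abs{G\cdot x}$.

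Ensuite, je partitionnerais $X$ en ses orbites. Chaque orbite $O$ s'écrit $O=G\cdot x$ pour n'importe lequel de ses éléments $x$, et l'argument précédent (appliqué à l'action de $G$ sur $O$, qui reste libre) donne $\abs{O}=\abs{G}$. Si $N$ désigne le nombre d'orbites, en sommant sur celles-ci on obtient $\abs{X}=N\abs{G}$. Cela établit simultanément que $\abs{G}$ divise $\abs{X}$ et que le quotient $\abs{X}/\abs{G}$ vaut exactement $N$, le nombre d'orbites.

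La démonstration ne présente pas de réelle difficulté : c'est le calcul standard orbite-stabilisateur, mais écrit de manière à ce que la finitude de $G$ soit une conséquence (via la bijection $\varphi_x$) et non une hypothèse. Le seul petit soin à apporter est donc l'ordre des implications et le traitement à part de $X=\emptyset$.
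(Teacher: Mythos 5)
Votre démonstration est correcte et suit essentiellement la même voie que celle du papier : l'injectivité de l'application orbitale $g\mapsto g\cdot x$ (stabilisateur trivial) donne la finitude de $G$, puis la partition de $X$ en orbites toutes de cardinal $\abs{G}$ fournit la divisibilité et l'identification du quotient au nombre d'orbites. Votre remarque sur le cas $X=\emptyset$ (que le papier passe sous silence en prenant $x\in X$) signale au passage que l'énoncé suppose implicitement $X$ non vide, sans quoi la finitude de $G$ ne saurait en découler.
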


\begin{corol}
\normalfont
    Si $X$ est un $G$-ensemble fini et si $G$ est fini, alors le cardinal de $L(X)$ est divisible par $\abs{G}$.
\end{corol}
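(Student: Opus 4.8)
Le plan est de se ramener directement à la proposition précédente, en l'appliquant cette fois au sous-ensemble $L(X)$ tout entier.

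Premièrement, je vérifierais que $L(X)$ est un sous-$G$-ensemble de $X$ : si $x\in X$ a un stabilisateur trivial et $g\in G$, le stabilisateur de $g\cdot x$ est le conjugué $g\,\op{Stab}(x)\,g^{-1}$, donc encore trivial ; ainsi $G\cdot x\subseteq L(X)$ et $L(X)$ est stable sous l'action. Ce point a déjà été signalé dans la remarque qui précède la proposition. Par construction, tout élément de $L(X)$ a un stabilisateur trivial, donc $L(X)$, muni de la restriction de l'action de $G$, est un $G$-ensemble \emph{libre} ; et il est fini puisque contenu dans $X$ qui l'est.

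Deuxièmement, j'appliquerais la proposition précédente à ce $G$-ensemble libre fini $L(X)$ : elle fournit exactement que $\abs{G}$ divise $\abs{L(X)}$ (elle donne au passage que $G$ est fini et que le nombre d'orbites de $L(X)$ vaut $\abs{L(X)}/\abs{G}$, mais la finitude de $G$ fait déjà partie des hypothèses du corollaire). C'est la conclusion souhaitée.

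Il n'y a pas d'obstacle sérieux : la seule chose à ne pas escamoter est la justification de la stabilité de $L(X)$ sous $G$ — c'est ce qui permet d'y voir un $G$-ensemble et donc d'invoquer la proposition — tout le reste étant immédiat.
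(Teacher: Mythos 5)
Votre démonstration est correcte et suit exactement la démarche du papier : la remarque précédant la proposition établit que $L(X)$ est stable par l'action et donc un $G$-ensemble libre, puis la proposition appliquée à $L(X)$ donne $\abs{G}\mid\abs{L(X)}$. Votre vérification explicite de la stabilité (via la conjugaison des stabilisateurs) ne fait qu'expliciter ce que le papier énonce brièvement.
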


\begin{proof}
    Prenons $x\in X$, puisque le stabilisateur de $x$ est trivial, on a une bijection entre $G$ et $G\cdot x$, donc $G$ est fini, et en partitionnant $X$ en orbites (toutes de taille $\abs{G}$), on obtient $\abs{G}\mid \abs{X}$.
\end{proof}

\subsection{Actions régulières}
On cherche à étudier un type bien particulier d'action d'un groupe $G$ : Fixons $X$ un ensemble et faisons agir $G$ sur $\mathcal F(G,X)$ (l'ensemble des applications de $G$ dans $X$) de la manière suivante :
$$g\cdot \alpha (x) = \alpha (xg)$$
pour tout $\alpha\in \mathcal F(G,X)$, tout $g\in G$ et tout $x\in X$. On vérifie aisément qu'il s'agit d'une action de groupe, qu'on appellera \textbf{action $X$-régulière}. 

\begin{remark} \normalfont Le cas $X=\{0,1\}$ correspond à l'action de $G$ sur l'ensemble de ses parties par translation (dans le mauvais sens). 
\end{remark}
Les actions régulières sont en quelque sorte universelles :
\begin{prop}
    (Plongement régulier) \normalfont Soit $X$ un $G$-ensemble. Il existe un morphisme injectif de $G$-ensembles de $X$ dans $\mathcal F(G,X)$. Autrement dit, tout $G$-ensemble se plonge dans un $G$ ensemble régulier.
\end{prop}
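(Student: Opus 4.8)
The plan is to construct the embedding explicitly by "evaluation at the orbit": given $x \in X$, I want to send it to the function $\alpha_x : G \to X$ that records where $x$ is sent by each group element. The natural guess is $\alpha_x(g) = g \cdot x$, but I need to check this respects the regular action as defined, namely $g \cdot \alpha(h) = \alpha(hg)$. So first I would compute $g \cdot \alpha_x$ at an arbitrary $h \in G$: we have $(g \cdot \alpha_x)(h) = \alpha_x(hg) = (hg) \cdot x = h \cdot (g \cdot x) = \alpha_{g \cdot x}(h)$, so indeed $g \cdot \alpha_x = \alpha_{g\cdot x}$, which shows $x \mapsto \alpha_x$ is a morphism of $G$-ensembles.

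**Next I would check injectivity.** If $\alpha_x = \alpha_y$, then evaluating at the identity $e \in G$ gives $x = e \cdot x = \alpha_x(e) = \alpha_y(e) = e \cdot y = y$. So the map is injective, and we are done. The map is $\Phi : X \to \mathcal F(G,X)$, $\Phi(x) = \big(g \mapsto g\cdot x\big)$.

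**I expect there is essentially no obstacle here** — the statement is a soft "Yoneda-flavored" fact and the proof is a one-line verification once the right formula is written down. The only mild subtlety worth stating carefully is the direction conventions: the paper's regular action $g\cdot\alpha(x) = \alpha(xg)$ is a \emph{right}-translation in the argument (as the remark about parties "dans le mauvais sens" already flags), so one must use $\alpha_x(g) = g\cdot x$ (and not $g^{-1}\cdot x$) for the computation $\alpha_x(hg) = (hg)\cdot x$ to telescope correctly via associativity of the action. If instead the regular action were written with $\alpha(g^{-1}x)$ one would adjust by an inverse; with the convention as given, the naive formula works verbatim. I would present the two displayed computations above (the morphism check and the injectivity check) and conclude.
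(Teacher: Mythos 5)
Votre démonstration est correcte et coïncide essentiellement avec celle du papier : même application $x \mapsto (g \mapsto g\cdot x)$, même vérification de l'équivariance par le calcul $(g\cdot\alpha_x)(h)=\alpha_x(hg)=(hg)\cdot x$, et même argument d'injectivité par évaluation en l'élément neutre. Rien à ajouter.
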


\begin{proof}
    Soit $x\in X$. On note $\varphi(x)$ la fonction $G\longrightarrow X$ qui à $g$ associe $gx$. Cela définit clairement une application injective $X\hookrightarrow \mathcal F(G,X)$ car $\varphi(x)(1)=x$. C'est un morphisme de $G$-ensembles : $\varphi(g\cdot x)(h)=hg\cdot x$ et $(g\cdot \varphi(x))(h)=\varphi(x)(hg)=hg\cdot x$.
\end{proof}

\begin{theorem} \normalfont
    Soit $H\trianglelefteq G$ un sous-groupe distingué de $G$. On considère l'action $X$-régulière sur $G/H$ et $\pi:G\longrightarrow G/H$ le morphisme quotient. Notons $\mathcal F(G,X)^H$ l'ensemble des $\alpha \in \mathcal F(G,X)$ fixés par tous les éléments de $H$ (autrement dit les $\alpha$ dont le stabilisateur contient $H$). On a alors une bijection naturelle :
    $$\boxed{\mathcal F(G,X)^H \cong \mathcal F(G/H,X)}$$
    De plus, en voyant naturellement $\mathcal F(G/H,X)$ comme un $G$ ensemble, la bijection est un \textbf{isomorphisme de $G$-ensembles}.
\end{theorem}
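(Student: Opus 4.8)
The plan is to exhibit the bijection explicitly as ``factoring through the quotient'' and then check that it intertwines the two actions. First I would unpack the fixed-point condition: an element $\alpha \in \mathcal F(G,X)$ lies in $\mathcal F(G,X)^H$ precisely when $h\cdot\alpha = \alpha$ for all $h \in H$, i.e. $\alpha(xh) = \alpha(x)$ for all $x \in G$ and $h \in H$. This says exactly that $\alpha$ is constant on each coset $xH$, hence factors uniquely as $\alpha = \beta \circ \pi$ with $\beta \in \mathcal F(G/H, X)$, by the universal property of the set-theoretic quotient $\pi : G\longrightarrow G/H$. Conversely, for any $\beta \in \mathcal F(G/H, X)$ the composite $\beta \circ \pi$ satisfies $(\beta\circ\pi)(xh) = \beta(\pi(x)\pi(h)) = \beta(\pi(x))$ since $\pi(h)$ is the identity, so $\beta \circ \pi \in \mathcal F(G,X)^H$. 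Thus $\Phi \colon \alpha \mapsto \beta$ and $\Psi \colon \beta \mapsto \beta\circ\pi$ are mutually inverse bijections, and since this construction involves no choices, it is natural in $X$.

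Next I would check that $\mathcal F(G,X)^H$ is stable under the $X$-regular $G$-action, so that the second assertion even makes sense. If $\alpha$ is fixed by $H$ and $g \in G$, then for $h \in H$ we have $(h\cdot(g\cdot\alpha))(x) = \alpha(xhg)$; writing $hg = gh'$ with $h' \in H$ --- this is where normality of $H$ enters --- this equals $\alpha(xgh') = \alpha(xg) = (g\cdot\alpha)(x)$, so $g\cdot\alpha \in \mathcal F(G,X)^H$. On the other side, $\mathcal F(G/H,X)$ carries the $X$-regular action of $G/H$ on itself, pulled back to a $G$-action along $\pi$: for $y \in G/H$ one sets $(g\cdot\beta)(y) = \beta(y\,\pi(g))$. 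This is the ``natural'' $G$-ensemble structure referred to in the statement.

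Finally I would verify the equivariance of $\Psi$: for $\beta \in \mathcal F(G/H,X)$ and $g \in G$,
$$\Psi(g\cdot\beta)(x) = (g\cdot\beta)(\pi(x)) = \beta(\pi(x)\,\pi(g)) = \beta(\pi(xg)) = \Psi(\beta)(xg) = (g\cdot\Psi(\beta))(x),$$
so $\Psi$, and hence $\Phi = \Psi^{-1}$, is an isomorphism of $G$-ensembles. I do not expect a genuine obstacle: the whole argument is bookkeeping, and the only points requiring care are keeping left and right cosets straight in the definition of the regular action and the single use of normality of $H$ in the stability check --- in the abelian setting of the paper the latter is of course automatic.
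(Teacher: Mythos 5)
Your argument is correct and is essentially the paper's proof: both exhibit the bijection as $\beta\mapsto\beta\circ\pi$ together with the unique factorization of an $H$-fixed $\alpha$ through the quotient, and then check $G$-equivariance. You merely spell out two points the paper dismisses as clear (stability of $\mathcal F(G,X)^H$ under the action, using normality of $H$, and the explicit equivariance computation), which is fine.
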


\begin{proof}
    On a un morphisme naturel $\mathcal F(G/H,X) \arw{\varphi}\mathcal F(G,X)^H$ défini par $\beta \mapsto \beta \circ \pi$. $\beta \circ \pi$ est bien fixée par $H$ : pour tout $h\in H$ et $x\in X$ on a $h\cdot \beta \pi (x)=\beta \pi (xh)=\beta\pi(x)$. Réciproquement, tout élément $\alpha \in \mathcal F(G,X)^H$ se factorise par $\pi$ car pour tout $h\in H$ et $x\in X$, $\alpha (xh)=h\cdot \alpha (x)=\alpha (x)$, ce qui permet de définir $\psi(\alpha)$ comme l'unique application faisant commuter le diagramme :
        \begin{center}
        \begin{tikzcd}
        G \arrow[r, "\alpha"] \arrow[d, "\pi"'] & X \\
        G/H \arrow[ru, "\psi(\alpha)"']         &  
        \end{tikzcd}
        \end{center}
    Le diagramme commute donc $\varphi\circ \psi(\alpha)=\alpha$ et on a clairement $\psi \circ \varphi(\beta)=\beta$ pour tout $\beta \in \mathcal F(G/H,X)$ par unicité de la factorisation. \\\\
    Enfin, il est clair que $\varphi$ est un morphisme de $G$-ensembles, d'où la conclusion.
\end{proof}

Dans la suite, on notera $\overline \alpha$ pour $\psi(\alpha)$.

\begin{corol}
    \normalfont Pour tout $\alpha\in \mathcal F(G,X)^H$, on a (en notant $\operatorname{Stab}$ pour le stabilisateur) :
    $$\boxed{\operatorname{Stab}_{G/H}(\bar\alpha)=\operatorname{Stab}_G(\alpha)/H}$$
\end{corol}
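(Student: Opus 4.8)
Le plan est d'exploiter le fait, déjà établi dans la preuve du théorème précédent, que $\varphi:\mathcal F(G/H,X)\to\mathcal F(G,X)^H$, $\beta\mapsto\beta\circ\pi$, est un \emph{isomorphisme de $G$-ensembles} (pour l'action de $G$ sur la source obtenue en tirant en arrière par $\pi$ l'action $X$-régulière de $G/H$), avec $\varphi(\bar\alpha)=\alpha$. D'abord je remarquerais que, $\alpha$ étant fixé par $H$, on a $H\subseteq\operatorname{Stab}_G(\alpha)$, et comme $H\trianglelefteq G$ le quotient $\operatorname{Stab}_G(\alpha)/H$ a bien un sens comme sous-groupe de $G/H$ ; de même $\mathcal F(G,X)^H$ est $G$-stable car $H$ est distingué.

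Ensuite je dérou015erais la chaîne d'égalités suivante. Un isomorphisme de $G$-ensembles préserve les stabilisateurs, donc $\operatorname{Stab}_G(\alpha)=\operatorname{Stab}_G(\bar\alpha)$, où ce dernier est calculé pour l'action de $G$ sur $\mathcal F(G/H,X)$. Par définition de cette action tirée en arrière, $g\cdot\bar\alpha=\bar\alpha$ équivaut à $\pi(g)\cdot\bar\alpha=\bar\alpha$, c'est-à-dire à $\pi(g)\in\operatorname{Stab}_{G/H}(\bar\alpha)$ ; autrement dit $\operatorname{Stab}_G(\bar\alpha)=\pi^{-1}(\operatorname{Stab}_{G/H}(\bar\alpha))$. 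En appliquant la surjection $\pi$ et en utilisant $H=\ker\pi\subseteq\operatorname{Stab}_G(\alpha)$, on obtient $\operatorname{Stab}_{G/H}(\bar\alpha)=\pi(\operatorname{Stab}_G(\alpha))=\operatorname{Stab}_G(\alpha)/H$, ce qui est l'égalité voulue.

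Il n'y a pas d'obstacle sérieux ici : le seul point à soigner est de bien vérifier que l'on raisonne avec la bonne action de $G$ sur $\mathcal F(G/H,X)$ (celle via $\pi$), et de ne pas oublier l'inclusion $H\subseteq\operatorname{Stab}_G(\alpha)$ qui rend le quotient licite. Si l'on préfère ne pas invoquer l'équivariance de $\varphi$, on peut mener le calcul à la main en partant de $\alpha=\bar\alpha\circ\pi$ : pour $g\in G$, la relation $g\cdot\alpha=\alpha$ s'écrit $\bar\alpha(\pi(x)\pi(g))=\bar\alpha(\pi(x))$ pour tout $x\in G$, et comme $\pi$ est surjective ceci équivaut à $\pi(g)\cdot\bar\alpha=\bar\alpha$ dans $\mathcal F(G/H,X)$ ; on conclut alors exactement comme ci-dessus.
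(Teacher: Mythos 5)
Votre preuve est correcte et suit essentiellement la même démarche que celle de l'article : utiliser l'isomorphisme de $G$-ensembles $\varphi$ pour obtenir $\operatorname{Stab}_G(\alpha)=\operatorname{Stab}_G(\bar\alpha)$, puis passer au quotient via $\pi$. Vous explicitez simplement (via $\operatorname{Stab}_G(\bar\alpha)=\pi^{-1}(\operatorname{Stab}_{G/H}(\bar\alpha))$ et la surjectivité de $\pi$) l'étape que l'article qualifie de \og claire \fg, ce qui est bienvenu.
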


\begin{proof}
    Puisque on dispose d'un tel isomorphisme de $G$-ensembles entre $\mathcal F(G,X)^H$ et $\mathcal F(G/H,X)$, il y a une compatibilité aux stabilisateurs. Autrement dit, pour tout $\alpha\in \mathcal F(G,X)^H$, le stabilisateur de $\alpha$ est aussi le stabilisateur de $\bar \alpha$ en voyant $\mathcal F(G/H,X)$ comme un $G$-ensemble. On a donc :
    $$\operatorname{Stab}_{G}(\alpha)=\operatorname{Stab}_{G}(\bar \alpha)$$
    Or $\operatorname{Stab}_{G/H}(\bar \alpha)=\operatorname{Stab}_G(\bar \alpha)/H$ (clair). On en déduit la formule voulue.
\end{proof}
 
De ce qui précède, pour $H\trianglelefteq G$, on a une \textbf{correspondance bijective entre les éléments libres de $\mathcal F(G/H,X)$ et les éléments de $\mathcal F(G,X)$ dont le stabilisateur est $H$}. 

\begin{defin}
    \normalfont
    On notera, quand $G$ et $X$ sont finis, \textbf{$L_{\abs{X}}(G)$ le nombre d'éléments libres de $\mathcal F(G,X)$} (ça ne dépend que de $\abs{X}$ et de $G$ à isomorphisme près). Notons que ce nombre est \textbf{divisible par $\abs{G}$} d'après ce qui a été dit plus haut. 
\end{defin}

Avec cette notation, on a :
$$\boxed{L_{\abs{X}}(G/H)=\abs{\left\{\alpha \in \mathcal F(G,X) \mid \op{Stab}\alpha = H\right\}}}$$

\subsection{Calcul de $L_t(G)$ pour $G$ abélien}

On peut voir $L_t$ (la fonction définie précedemment) comme une fonction abélienne. On dispose alors d'une formule agréable pour ce nombre :

\begin{prop}
    \normalfont
    Soit $t\geq 1$. On a l'égalité de fonctions abéliennes suivante :
    $$\boxed{L_t = N_t}$$
    où $N_t=\mu * t^{\abs{\bullet}}$, 
\end{prop}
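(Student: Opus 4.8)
The plan is to show that the abelian function $L_t$ satisfies the same defining convolution identity as $N_t$, namely $L_t * 1 = t^{\abs{\bullet}}$, and then invoke the fact that $1$ is invertible in $\ring$ (with inverse $\mu$) so that $L_t = \mu * t^{\abs{\bullet}} = N_t$. Since two abelian functions with the same convolution against $1$ must agree, this reduces everything to establishing one clean counting identity.

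First I would fix $G \in \mbb G$ and a finite set $X$ with $\abs X = t$, and count the set $\mathcal F(G,X)$ in two ways. On one hand its cardinality is plainly $t^{\abs G}$. On the other hand, I partition $\mathcal F(G,X)$ according to the stabilizer of each element: every $\alpha \in \mathcal F(G,X)$ has some stabilizer $H = \op{Stab}(\alpha)$, which (since $G$ is abelian) is a subgroup of $G$, automatically distinguished. So
\begin{align*}
t^{\abs G} = \abs{\mathcal F(G,X)} = \sum_{H \leq G} \abs{\{\alpha \in \mathcal F(G,X) \mid \op{Stab}\alpha = H\}}.
\end{align*}
Now the boxed formula established just before the statement — a consequence of Theorem on $\mathcal F(G,X)^H \cong \mathcal F(G/H,X)$ and its corollary on stabilizers — identifies the $H$-th summand as exactly $L_{\abs X}(G/H) = L_t(G/H)$. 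Hence $t^{\abs G} = \sum_{H \leq G} L_t(G/H) = (L_t * 1)(G)$, where I am using that $1(H) = 1$ for all $H$ and the definition of $*$. Since this holds for every $G$, we get the identity $L_t * 1 = t^{\abs{\bullet}}$ in $\ring$.

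Finally, since $1 \in \ring^\times$ with $1^{-1} = \mu$ (this is the definition of $\mu$ given in the excerpt), convolving both sides by $\mu$ gives $L_t = \mu * t^{\abs{\bullet}} = N_t$, as desired. I do not expect any real obstacle here: the only subtle point is making sure that when $G$ is abelian the stabilizer of a point is a normal subgroup so that the quotient $G/H$ and the cited isomorphism $\mathcal F(G,X)^H \cong \mathcal F(G/H,X)$ genuinely apply — but this is immediate in the abelian case. One should also check that $L_t$ really is a well-defined abelian function, i.e. that the number of free elements of $\mathcal F(G,X)$ depends only on $t$ and on the isomorphism class of $G$, which is clear since any isomorphism $G \cong G'$ induces a stabilizer-preserving bijection $\mathcal F(G,X) \cong \mathcal F(G',X)$; this was already noted when $L_t$ was defined.
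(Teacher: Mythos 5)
Your proposal is correct and follows exactly the paper's argument: count $\mathcal F(G,X)$ by grouping elements according to their stabilizer, identify each class with the free elements of $\mathcal F(G/H,X)$ to get $t^{\abs{\bullet}} = 1 * L_t$, then convolve with $\mu$. The extra remarks on normality of stabilizers and well-definedness of $L_t$ are fine but not a departure from the paper's route.
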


\begin{proof}
    On prend $X$ un ensemble à $t$ éléments, et on dénombre $\mathcal F(G,X)$ en regroupant les éléments selon leur stabilisateur :
    $$t^{\abs{G}}=\sum_{H\leq G}\abs{\{\alpha \in G \mid \operatorname{Stab}(\alpha)=H\}}
    =\sum_{H\leq G} L_t(G/H)$$
    On a donc $t^{\abs{\bullet}}=1*L_t$. On en déduit en convoluant par $\mu$ :
    $$L_t=\mu *t^{\abs{\bullet}}=N_t$$
\end{proof}

\begin{corol}
\normalfont
     Le théorème introduit en début de section en découle directement puisque $L_t(\abs{G})$ est divisible par $\abs{G}$ : \textbf{le nombre de parties génératrices de $G$ est divisible par $\abs{G}$}. De plus, on a le résultat arithmétique suivant (en spécialisant ce qui précède au groupe $\mbb Z/n\mbb Z$) :
     $$\boxed{\sum_{d\mid n}\mu(d)t^{n/d}\equiv 0 \ [n]}$$
\end{corol}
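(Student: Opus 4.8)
The plan is simply to assemble pieces already in place: the identity $L_t = N_t$ of the preceding proposition, the divisibility property of free elements from the subsection on free actions, and the translation of the abelian convolution into the Dirichlet convolution on $\mbb Z/n\mbb Z$.

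First I would establish the theorem. Fix a set $X$ with $t$ elements and consider the $X$-regular action of $G$ on $Y = \mc F(G,X)$. Since $G$ and $X$ are finite, $Y$ is a finite $G$-ensemble, so the Corollaire of the subsection "Actions libres" applies and gives that $\abs{G}$ divides $\abs{L(Y)}$, which by definition is $L_t(G)$. By the proposition $L_t = N_t$ we conclude that $\abs{G} \mid N_t(G)$ for every $G \in \mbb G$. Specializing to $t = 2$ and invoking the earlier identification of $N_2$ with the function "nombre de parties génératrices", we obtain that the number of generating subsets of $G$ is divisible by $\abs{G}$.

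Next I would specialize $G = \mbb Z/n\mbb Z$ to get the arithmetic statement. Unwinding the convolution, $N_t(G) = (\mu * t^{\abs{\bullet}})(G) = \sum_{H \le G}\mu(H)\,t^{\abs{G/H}}$. The subgroups of $\mbb Z/n\mbb Z$ are in bijection with the divisors $d$ of $n$, the one of order $d$ being cyclic; via the morphism $\ring \longrightarrow \ring_{\nat^*}$ this gives $\mu(H) = \mu(d)$, while the corresponding quotient is $\mbb Z/(n/d)\mbb Z$, so $\abs{G/H} = n/d$. Hence $N_t(\mbb Z/n\mbb Z) = \sum_{d \mid n}\mu(d)\,t^{n/d}$, and since $\abs{\mbb Z/n\mbb Z} = n$ divides this quantity by the first part, we get $\sum_{d\mid n}\mu(d)\,t^{n/d}\equiv 0\ [n]$.

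There is essentially no obstacle here, as the substantive content has been proved above; the corollary is a matter of correctly invoking the free-action divisibility statement for the concrete finite $G$-ensemble $\mc F(G,X)$, and of reading off the Dirichlet convolution from the abelian one on cyclic groups. The only point I would make sure to spell out is that $L_t(G)$ genuinely is the count of free elements of an honest finite $G$-ensemble, so that the Corollaire applies verbatim with no extra hypothesis.
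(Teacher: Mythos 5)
Your proposal is correct and follows essentially the paper's own route: the free-action divisibility corollary applied to the finite $G$-ensemble $\mc F(G,X)$ gives $\abs{G}\mid L_t(G)$, the identity $L_t=N_t$ transfers this to $N_t$ (with $N_2$ already identified as the number of generating subsets), and the congruence comes from reading off the convolution on the cyclic group $\mbb Z/n\mbb Z$ via its divisor lattice. No gaps; your explicit check that $L_t(G)$ is indeed the count of free elements of an honest finite $G$-ensemble is exactly the point the paper relies on implicitly.
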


\section{Génération du groupe symétrique}

\subsection{Isométries et interstices}
On considère un groupe abélien (non nécessairement fini) $G$ \textbf{avec au moins $3$ éléments} et son plongement de Cayley $G\longrightarrow \sym_G$. Dans la suite, on confondra $G$ et son image par le plongement de Cayley, de sorte que l'on écrira $G\subseteq \sym_G$. On se pose la question suivante (fréquente en théorie de Galois par exemple) : que faut-il ajouter à $G$ pour engendrer $\sym_G$ ? \\

On commence par un résultat général :

\begin{lem}
    \normalfont Soit $X$ un ensemble. L'ensemble des permutations de $X$ à support fini, $\sym_X^f$, est le sous-groupe de $\sym_X$ engendré par les transpositions.
\end{lem}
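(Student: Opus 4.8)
Le plan est d'établir les deux inclusions. Pour l'inclusion immédiate, je vérifierais d'abord que $\sym_X^f$ est un sous-groupe de $\sym_X$ : le support de $\sigma\circ\tau$ est contenu dans la réunion des supports de $\sigma$ et de $\tau$ (tout point fixé à la fois par $\sigma$ et par $\tau$ est fixé par $\sigma\circ\tau$), donc est fini, et une permutation a le même support que son inverse. Comme toute transposition a un support à deux éléments, le sous-groupe engendré par les transpositions est inclus dans $\sym_X^f$.

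Pour l'inclusion réciproque, je prendrais $\sigma\in\sym_X^f$ de support $S$, nécessairement fini, et je raisonnerais par récurrence sur $\abs S$. L'observation préliminaire clé est que $\sigma$ stabilise $S$ : si $x\in S$ vérifiait $\sigma(x)\notin S$, alors $\sigma$ fixerait $\sigma(x)$, ce qui contredit l'injectivité de $\sigma$ puisque $\sigma(x)\neq x$. Si $\abs S\leq 1$, alors $\sigma=\op{id}$ (une permutation ne peut déplacer un unique point), c'est le produit vide de transpositions. Si $\abs S\geq 2$, je choisirais $x\in S$, poserais $y=\sigma(x)$, qui appartient à $S$ et diffère de $x$, puis considérerais $\tau=(x\,y)\circ\sigma$. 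On a $\tau(x)=x$ et $\tau$ fixe tout point hors de $S$, donc son support est contenu dans $S\setminus\{x\}$, de cardinal au plus $\abs S-1$ ; par hypothèse de récurrence $\tau$ est un produit de transpositions, et comme $(x\,y)$ est involutive, $\sigma=(x\,y)\circ\tau$ l'est aussi.

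Il n'y a pas réellement d'obstacle sérieux dans cet argument ; le seul point qu'il ne faut pas négliger est l'égalité $\sigma(S)=S$, qui est précisément ce qui garantit que $\sigma$ définit une permutation de $S$ et fait fonctionner la récurrence proprement. On pourrait d'ailleurs, au lieu de la récurrence directe, invoquer le résultat classique sur les groupes symétriques \emph{finis} appliqué à la permutation induite par $\sigma$ sur l'ensemble fini $S$, mais la preuve directe est tout aussi courte et reste autonome.
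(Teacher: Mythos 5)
Votre démonstration est correcte et repose sur la même idée maîtresse que celle de l'article : tout se joue sur la finitude du support. La seule différence est de présentation : l'article restreint $\sigma$ à son support $S$ pour obtenir $\eta\in\sym_S$ et invoque le résultat classique de génération de $\sym_S$ par les transpositions ($S$ fini), tandis que vous refaites cette génération sur place, par récurrence sur $\abs{S}$, en multipliant par $(x\ \sigma(x))$ pour faire décroître le support — variante que vous signalez d'ailleurs vous-même comme équivalente à l'appel au cas fini. Votre version est donc un peu plus autonome, et elle a le mérite d'expliciter le point que l'article laisse implicite derrière les mots « la restriction naturelle » : c'est précisément la stabilisation $\sigma(S)=S$ (que vous justifiez par injectivité) qui permet de voir $\sigma$ comme une permutation de $S$, puis de prolonger les transpositions de $S$ en transpositions de $X$ pour relever la décomposition de $\eta$ en une décomposition de $\sigma$. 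Aucune lacune à signaler ; les deux inclusions et l'étape de récurrence sont correctement menées.
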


\begin{proof}
    Clairement $\sym_X^f$ est un sous-groupe de $\sym_X$ qui contient les transpositions. Ensuite, si $\sigma \in \sym_X^f$, considérons $S$ son support fini et $\eta \in \sym_S$ la restriction naturelle de $\sigma$. Puisque $S$ est fini, $\sym_S$ est engendré par les transpositions, ce qui permet d'écrire $\eta$ puis $\sigma$ comme un produit de transpositions.
\end{proof}

\begin{defin}
    \normalfont (Isométries modulo $H$) Soit $H$ un sous-groupe de $G$. Un élément $\sigma \in \sym_G$ est une \textbf{isométrie modulo $H$} si pour tous $x,y\in G$ on a :
    $$\sigma(x)-\sigma(y) \equiv x - y \ [H]$$
    Les isométries modulo $H$ forment un sous-groupe de $\sym_G$ contenant $G$, noté $O(H)$.
\end{defin}

\begin{defin}
    \normalfont (Interstices de $K$) Soit maintenant $K$ un sous-groupe de $\sym_G$ contenant $G$ (on dira qu'un tel groupe est de Cayley). Un élément $\delta\in G$ est un \textbf{interstice} de $K$ si l'une des condtions suivantes (équivalentes) est vérifiée :
    \begin{itemize}
        \item $(0\ \delta)\in K$
        \item $\exists g\in G\ (g\ g+\delta)\in K$
        \item $\forall g\in G\ (g\ g+\delta)\in K$
    \end{itemize}
    L'ensemble des interstices de $K$ forme un sous-groupe de $G$ noté $\Delta(K)$.
\end{defin}

\begin{proof}
    \textbf{Les trois conditions sont équivalentes :} la troisième entraîne clairement la première, la première entraîne la seconde, et si la seconde est vraie, prenons un $g$ tel que $(g\ g+\delta)\in K$ ; soit $h\in G$, on a :
    $$(h\ h+\delta) = (h-g) \circ (g\ g+\delta)\circ (g-h)\in K$$
    car $K$ est de Cayley. \\

    \textbf{$\Delta(K)$ est un sous-groupe de $G$ :} on a clairement $0\in \Delta(K)$. Soient $x,y\in \Delta(K)$, on a $(0\ x)\in K$ donc $(x\ 0)\in K$ donc $-x\in \Delta(K)$. Ensuite $(x\ x+y)\in K$ donc en conjuguant, dans le cas où $x$ et $x+y$ sont non-nuls :
    $$(0\ x+y)=(x\ x+y)(0\ x)(x\ x+y)\in K$$
    donc $x+y\in \Delta(K)$. Le cas contraire est immédiat.
\end{proof}

On a ainsi défini une application croissante $O:\mathcal S(G)\longrightarrow \mathcal S_c(\sym_G)$ (sous-groupes de Cayley) et une application croissante $\Delta : \mathcal S_c(\sym_G)\longrightarrow \mathcal S(G)$. Elles ne sont pas réciproques l'une de l'autre en général, mais on a tout de même :

\begin{theorem}
    \normalfont $\Delta$ est surjective, $O$ est injective, et :
    $$\boxed{\Delta \circ O=\op{id}}$$
    De plus, $O$ possède un adjoint à gauche, $L$ donné par :
    $$L(K)=\left\langle\sigma(x)-\sigma(y)-x+y\mid \sigma \in K, x,y\in G\right\rangle$$
\end{theorem}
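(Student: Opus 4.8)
The plan is to reduce every assertion to an unwinding of the three definitions; no machinery beyond that is needed, and the only slightly delicate point is the use of the standing hypothesis $\abs G\geq 3$. Everything hinges on the single identity $\Delta(O(H))=H$ for all $H\leq G$, from which the injectivity of $O$ and the surjectivity of $\Delta$ are purely formal.

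To establish $\Delta\circ O=\op{id}$, I would fix $H\leq G$ and argue by double inclusion. For $H\subseteq\Delta(O(H))$: given $\delta\in H$, observe that the transposition $(0\ \delta)$ satisfies $(0\ \delta)(z)\equiv z\ [H]$ for every $z\in G$ — obvious when $z\notin\{0,\delta\}$, and when $z\in\{0,\delta\}$ it is exactly the assumption $\delta\equiv 0\ [H]$; subtracting two such congruences shows that $(0\ \delta)$ is an isometry modulo $H$, i.e. $(0\ \delta)\in O(H)$, whence $\delta\in\Delta(O(H))$. For the reverse inclusion: if $\delta\in\Delta(O(H))$ then $(0\ \delta)\in O(H)$; assuming $\delta\neq 0$ (the case $\delta=0$ being trivial), pick — using $\abs G\geq 3$ — some $y\in G\setminus\{0,\delta\}$, and apply the isometry relation of $(0\ \delta)$ to the pair $(0,y)$: since $(0\ \delta)(0)=\delta$ and $(0\ \delta)(y)=y$, this reads $\delta-y\equiv -y\ [H]$, hence $\delta\in H$. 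With the identity in hand, $\Delta$ is a left inverse of $O$ (so $O$ is injective) and $O$ is a section of $\Delta$ (so $\Delta$ is surjective, every $H$ being $\Delta(O(H))$).

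For the adjunction, first note that $L(K)$ is a legitimate subgroup of $G$, since each proposed generator $\sigma(x)-\sigma(y)-x+y$ with $\sigma\in K$ and $x,y\in G$ is an element of $G$. The claim that $L$ is a left adjoint of $O$ unpacks to the equivalence $L(K)\leq H \iff K\leq O(H)$ for a Cayley subgroup $K$ and $H\leq G$, and this is simply a restatement of the definition of $O$: a permutation $\sigma\in\sym_G$ lies in $O(H)$ precisely when $\sigma(x)-\sigma(y)-x+y\in H$ for all $x,y$, so $K\leq O(H)$ holds exactly when all the generators of $L(K)$ lie in $H$, which (as $H$ is a subgroup) is exactly $L(K)\leq H$. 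Monotonicity of $L$, like that of $O$, then follows automatically, as it does for any Galois connection.

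I expect no real obstacle: the proof is a chain of definition-chasing checks. The single non-formal ingredient is the existence of a third element of $G$ used in the inclusion $\Delta(O(H))\subseteq H$; this is where $\abs G\geq 3$ is essential — in $\mbb Z/2\mbb Z$, for instance, the transposition $(0\ 1)$ would be an isometry modulo the trivial subgroup, so the statement would genuinely fail. Everything else is bookkeeping.
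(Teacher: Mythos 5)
Your proof is correct and follows essentially the same route as the paper: both establish $\Delta\circ O=\op{id}$ by double inclusion, using the transposition $(0\ \delta)$ and a third element of $G$ for the inclusion $\Delta(O(H))\subseteq H$, and then deduce injectivity of $O$ and surjectivity of $\Delta$ formally. Your explicit unwinding of the adjunction as $L(K)\leq H\iff K\leq O(H)$ is merely a spelled-out version of what the paper dismisses as clear.
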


\begin{proof}
    Il suffit de montrer $\Delta \circ O=\op{id}$. Soit $H$ un sous-groupe de $G$, on a $H\subseteq \Delta\circ O(H)$ car étant donné un $h\in H$, $(0\ h)$ est bien une isométrie modulo. Ensuite, si $\delta \in \Delta \circ O(H)$, alors $(0\ \delta)\in O(H)$. Puisque $G$ a au moins $3$ éléments, il existe $x\in G\setminus\{0,\delta\}$ de sorte que :
    $$(0\ \delta)(x) - (0\ \delta)(0)\equiv x-0 \ [H]$$
    Autrement dit $x-\delta-x\in H$ donc $\delta\in H$. L'adjonction entre $L$ et $O$ est claire.
\end{proof}

\begin{defin}
    \normalfont (Sous-groupe de Cayley engendré par une partie) \\
    Si $S\subseteq \sym_G$, on note $((S))$ le plus petit sous-groupe de Cayley contenant $S$. Clairement, $((S))=\langle G \cup S\rangle $.
\end{defin}

\begin{prop}
\normalfont
    On a la relation suivante, pour $\tau = (x\ y)$ :
    $$\boxed{\Delta((\tau))=\langle y-x \rangle}$$
    Plus généralement, pour $\tau_i=(x_i\ y_i)$, avec $1\leq i\leq n$, on a :
    $$\boxed{\Delta((\tau_1,\dots, \tau_n))=\langle \delta_1,\dots,\delta_n\rangle}$$
    avec $\delta_i=y_i-x_i$.
\end{prop}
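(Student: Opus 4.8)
The plan is to establish the general formula $\Delta((\tau_1,\dots,\tau_n)) = \langle \delta_1,\dots,\delta_n\rangle$ directly, and then note that the single-transposition case is the instance $n=1$. Write $K = ((\tau_1,\dots,\tau_n)) = \langle G \cup \{\tau_1,\dots,\tau_n\}\rangle$ and $D = \langle \delta_1,\dots,\delta_n\rangle \leq G$.

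First I would prove the inclusion $D \subseteq \Delta(K)$. Each $\tau_i = (x_i\ y_i)$ is of the form $(g\ g+\delta_i)$ with $g = x_i$ and $\delta_i = y_i - x_i$, so by the second defining condition of an interstice, $\delta_i \in \Delta(K)$ for each $i$. Since $\Delta(K)$ is a subgroup of $G$ (proved above), it contains the subgroup generated by the $\delta_i$, i.e. $D \subseteq \Delta(K)$.

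For the reverse inclusion $\Delta(K) \subseteq D$, the idea is to exhibit a Cayley subgroup $K'$ with $G \cup \{\tau_1,\dots,\tau_n\} \subseteq K'$ and $\Delta(K') \subseteq D$; then $K \subseteq K'$ forces $\Delta(K) \subseteq \Delta(K') \subseteq D$ by monotonicity of $\Delta$. The natural candidate is $K' = O(D)$, the group of isometries modulo $D$. It contains $G$ by definition, and it contains each $\tau_i$: one checks that the transposition $(x_i\ y_i)$ is an isometry modulo $D$, since swapping two elements that are congruent mod $D$ (note $y_i \equiv x_i\ [D]$ because $\delta_i \in D$) perturbs all differences only by elements of $D$ — concretely, for any $u,v$, $(x_i\ y_i)(u) - (x_i\ y_i)(v) - (u-v) \in D$ because the only differences affected involve $x_i$ or $y_i$, and replacing $x_i$ by $y_i$ or vice versa changes a difference by $\pm\delta_i \in D$. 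Finally, $\Delta(O(D)) = D$ by the identity $\Delta \circ O = \op{id}$ established above. Combining, $\Delta(K) \subseteq \Delta(O(D)) = D$, which completes the proof; the case $\tau = (x\ y)$ is just $n=1$, giving $\Delta((\tau)) = \langle y - x\rangle$.

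The main obstacle I anticipate is the verification that each transposition $\tau_i$ lies in $O(D)$, which requires a careful case analysis on whether the arguments $u,v$ in the isometry condition equal $x_i$, $y_i$, or neither; the computation is elementary but must be done cleanly. Everything else reduces to the already-proven facts that $\Delta$ is monotone, that $\Delta(K)$ is a subgroup, and that $\Delta \circ O = \op{id}$.
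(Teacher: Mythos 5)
Your proof is correct and follows essentially the same route as the paper: both directions rest on $\delta_i\in\Delta(K)$ via the subgroup structure of $\Delta(K)$, and on the key observation that $\tau_i\in O(\langle\delta_1,\dots,\delta_n\rangle)$ combined with monotonicity of $\Delta$ and the identity $\Delta\circ O=\op{id}$. The only difference is that you spell out the case check showing each transposition is an isometry modulo $D$, which the paper leaves implicit.
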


\begin{proof}
    Clairement $\Delta((\tau_1,\dots,\tau_n))\supseteq \langle \delta_1\dots,\delta_n \rangle$ car $\tau_i \in ((\tau_1,\dots,\tau_n))$. Ensuite la magie opère : $\tau_i \in O(\langle \delta_1,\dots,\delta_n\rangle )$, donc $((\tau_1,\dots,\tau_n))\subseteq O(\langle \delta_1\dots \delta_n \rangle)$ (car c'est un sous-groupe de Cayley), et par croissance de $\Delta$ :
    $$\Delta((\tau_1,\dots,\tau_n))\subseteq  \Delta \circ O(\langle \delta_1,\dots,\delta_n\rangle)=\langle \delta_1,\dots,\delta_n\rangle$$
    d'où la conclusion.
\end{proof}

La proposition suivante motive complétement cette section : on ramène la question de générer le groupe symétrique (à support fini) à la question plus simple de générer $G$.

\begin{prop}
    \normalfont Soit $K$ un sous-groupe de Cayley. On a :
    $$\boxed{K\supseteq \sym_G^f \iff \Delta(K)=G}$$
    Si $G$ est fini, cela donne :
    $$\boxed{K=\sym_G\iff \Delta(K)=G}$$
\end{prop}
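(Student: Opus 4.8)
The plan is to reduce both implications to two facts already established: the description of $\sym_G^f$ as the subgroup of $\sym_G$ generated by transpositions (the lemma above), and the three equivalent characterizations of an interstice. No heavy machinery is needed; the work is in plugging in the right special cases.

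First I would prove the implication $\Delta(K)=G\Rightarrow K\supseteq\sym_G^f$. Since $\sym_G^f$ is generated by transpositions, it suffices to show that every transposition $(x\ y)$ with $x\neq y$ lies in $K$. Set $\delta=y-x$; this is a nonzero element of $G=\Delta(K)$, so by the third (equivalent) defining property of an interstice, $(g\ g+\delta)\in K$ for every $g\in G$. Taking $g=x$ yields $(x\ y)=(x\ x+\delta)\in K$. As $K$ is a subgroup of $\sym_G$ containing all transpositions, it contains the subgroup they generate, namely $\sym_G^f$.

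For the converse, suppose $K\supseteq\sym_G^f$. Then for every $\delta\in G$ the transposition $(0\ \delta)$ — which is the identity when $\delta=0$ — belongs to $\sym_G^f\subseteq K$, so by the first defining property of an interstice, $\delta\in\Delta(K)$. Hence $\Delta(K)=G$. This settles the first boxed equivalence. For the second, when $G$ is finite every permutation of $G$ has finite support, so $\sym_G^f=\sym_G$; since $K\subseteq\sym_G$ always holds, the condition $K\supseteq\sym_G^f$ is then exactly $K=\sym_G$, and the second equivalence follows from the first.

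I do not anticipate any genuine obstacle. The only point requiring care is the degenerate case $\delta=0$ (equivalently $x=y$): an interstice need not give a genuine transposition, which is precisely why one must invoke the generating set of \emph{transpositions} rather than arbitrary pairs $(x\ y)$, and why the lemma identifying $\sym_G^f$ with $\langle\text{transpositions}\rangle$ is the right tool.
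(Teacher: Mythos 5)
Your proof is correct and follows essentially the same route as the paper: both reduce the statement to the lemma that $\sym_G^f$ is generated by the transpositions together with the equivalent characterizations of an interstice, your version merely spelling out the details (including the harmless $\delta=0$ case and $\sym_G^f=\sym_G$ when $G$ is finite) that the paper declares clear.
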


\begin{proof}
    Les équivalences suivantes sont claires (par le lemme vu précedemment) :
    $$K\supseteq \sym_G^f \iff K \ \text{contient toutes les transpositions} \iff \Delta(K)=G$$
\end{proof}

Le fait important qui découle de toutes ces généralités est le suivant :

\begin{theorem}
    \normalfont En gardant les notations précédentes, on a :
    $$\boxed{((\tau_1,\dots,\tau_n))\supseteq \sym_G^f \iff \langle \delta_1,\dots, \delta_n\rangle = G}$$
    Et quand $G$ est fini :
    $$\boxed{((\tau_1,\dots,\tau_n)) = \sym_G \iff \langle \delta_1,\dots, \delta_n\rangle = G}$$
\end{theorem}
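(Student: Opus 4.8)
Le plan est de combiner directement les deux propositions qui précèdent, le théorème étant essentiellement un corollaire. D'abord, on observe que $K := ((\tau_1,\dots,\tau_n))$ est, par définition, un sous-groupe de Cayley de $\sym_G$ (puisque $((S))=\langle G\cup S\rangle$ contient $G$). On peut donc lui appliquer les résultats établis sur les sous-groupes de Cayley. La proposition sur les interstices d'un sous-groupe de Cayley engendré par des transpositions fournit alors l'égalité $\Delta(K)=\langle \delta_1,\dots,\delta_n\rangle$, où $\delta_i=y_i-x_i$.

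Ensuite, on invoque la proposition qui ramène la question de la génération du symétrique à celle de la génération de $G$ : pour tout sous-groupe de Cayley $K$, on a $K\supseteq \sym_G^f \iff \Delta(K)=G$. En y substituant l'expression de $\Delta(K)$ obtenue à l'étape précédente, on obtient immédiatement la première équivalence $((\tau_1,\dots,\tau_n))\supseteq \sym_G^f \iff \langle \delta_1,\dots,\delta_n\rangle=G$. Pour le cas fini, on utilise la seconde partie de la même proposition : lorsque $G$ est fini, $\sym_G^f=\sym_G$, d'où $K=\sym_G \iff \Delta(K)=G$, ce qui donne la seconde équivalence de la même manière.

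Il n'y a pas réellement d'obstacle ici : toute la substance a été dégagée en amont, en particulier le « miracle » $\tau_i\in O(\langle \delta_1,\dots,\delta_n\rangle)$ et l'identité $\Delta\circ O=\op{id}$ qui permettent de contrôler $\Delta((\tau_1,\dots,\tau_n))$ par en-dessous \emph{et} par au-dessus. Le seul point de vigilance est de bien noter que $((\tau_1,\dots,\tau_n))$ est de Cayley, afin d'avoir le droit d'appliquer les propositions invoquées ; c'est automatique par construction. La rédaction se réduira donc à enchaîner les deux équivalences ci-dessus, sans calcul supplémentaire.
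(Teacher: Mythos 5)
Ta preuve est correcte et suit exactement la logique du texte : le théorème y est présenté comme un corollaire immédiat des deux propositions précédentes, et tu les enchaînes comme il faut, en notant bien que $((\tau_1,\dots,\tau_n))$ est de Cayley, que $\Delta((\tau_1,\dots,\tau_n))=\langle \delta_1,\dots,\delta_n\rangle$, puis que $K\supseteq \sym_G^f \iff \Delta(K)=G$ (avec $\sym_G^f=\sym_G$ dans le cas fini). Rien à ajouter.
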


\subsection{Applications}
Voyons une application directe :

\begin{theorem} 
\normalfont (Génération de $\sym_n$ avec un $n$-cycle et une transposition) \\
    Soit $n\geq 3$ et $\tau=(i\ j)\in \sym_n$. Le cycle $(1\ 2\ \dots\ n)$ et la transposition $\tau$ engendrent $\sym_n$ si et seulement si $n\wedge (j-i)=1$. En particulier, si $p\geq 3$ est premier, un $p$-cycle et une transposition engendrent toujours $\sym_p$. 
\end{theorem}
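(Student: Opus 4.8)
The plan is to observe that this statement is the preceding structural theorem read in the language of $\sym_G$ with $G=\mbb Z/n\mbb Z$. First I would identify $\{1,\dots,n\}$ with $\mbb Z/n\mbb Z$ by reducing modulo $n$ (so that $n$ becomes $0$); under this relabelling the cycle $c=(1\ 2\ \dots\ n)$ becomes the map $x\mapsto x+1$, that is, the image of the element $1\in G$ under the Cayley embedding $G\hookrightarrow\sym_G$. Since $1$ generates $\mbb Z/n\mbb Z$, the subgroup $\langle c\rangle$ of $\sym_G$ is exactly the Cayley copy of $G$, hence $\langle c,\tau\rangle=\langle G\cup\{\tau\}\rangle=((\tau))$ in the notation of the previous subsection. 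Note that relabelling the ground set is conjugation inside $\sym_n$, so it is harmless for deciding whether a pair of permutations generates; and $n\geq 3$ guarantees $G$ has at least $3$ elements, as required by that subsection.

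Next I would apply the immediately preceding theorem to the single transposition $\tau=(i\ j)$: it gives $((\tau))=\sym_G$ if and only if $\langle\delta_1\rangle=G$, where $\delta_1=j-i\in\mbb Z/n\mbb Z$. The argument then closes with the elementary fact that a residue class $\overline m$ generates $\mbb Z/n\mbb Z$ exactly when $\gcd(n,m)=1$; taking $m=j-i$ yields $\langle c,\tau\rangle=\sym_n\iff n\wedge(j-i)=1$. For the final assertion, if $p\geq 3$ is prime and $\tau=(i\ j)$ is any transposition then $i\not\equiv j\pmod p$, so $p\nmid(j-i)$ and the coprimality condition holds automatically, whence $c$ and $\tau$ generate $\sym_p$.

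I do not expect a genuine obstacle here: all the real content sits in the structural theorem on $((\tau_1,\dots,\tau_n))$ established above. The only points needing a line of care are checking that $c$ really is a Cayley translation by $1$ (not merely some $n$-cycle up to relabelling — though conjugacy would suffice anyway) and that $\langle c\rangle$ is literally all of $G\subseteq\sym_G$ rather than just a generating subset of $\sym_G$. Once these are noted, the equivalence is immediate.
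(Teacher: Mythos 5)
Your proposal is correct and is exactly the paper's argument: the paper's entire proof is ``appliquer ce qui précède à $G=\mbb Z_n$'', and you have simply spelled out the identification of the $n$-cycle with the Cayley translation $x\mapsto x+1$, the equality $\langle c,\tau\rangle=((\tau))$, and the reduction via the preceding theorem to $\langle j-i\rangle=\mbb Z_n$, i.e.\ $n\wedge(j-i)=1$. The points you flag as needing care (that $\langle c\rangle$ is literally the Cayley copy of $G$, and that $n\geq 3$ meets the standing hypothesis) are precisely the right ones, and no further work is required.
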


\begin{proof}
    Appliquer ce qui précède à $G=\mbb Z_n$.
\end{proof}

Une autre application est que $x\mapsto x+1$, la transposition $(0\ 2)$ et la transposition $(0\ 3)$ engendrent toutes les permutations à support fini de $\mbb Z$.

\begin{remark}
    \normalfont Tout ceci ne fonctionne pas pour un groupe d'ordre $2$, puisque $\sym_2$ est égal à $((\op{id}))$, alors que, en notant $\op{id}=(1\ 1)$, on n'a pas $1-1$ premier avec $2$.
\end{remark}

\subsection{Description des isométries}
On reprend les notations de la partie sur le groupe symétrique. Soit $H$ un sous-groupe de $G$, et $f\in O(H)$. On a, par définition :
$$f(x)-f(y)\equiv x-y \ [H]$$
pour tous $x,y\in G$. On peut réécrire cela ainsi :
$$f(x)-x\equiv f(y)-y\ [H]$$
Autrement, dit, $f(x)-x$ est une constante modulo $H$, on la note $c(f)\in G/H$. De plus, une permutation est une isométrie modulo $H$ si et seulement si il existe une telle constante modulo $H$.

\begin{prop}
    \normalfont Cela définit un morphisme de groupes :
    $$O(H) \arw{c} G/H$$
\end{prop}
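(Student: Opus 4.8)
The plan is to verify directly that the map $c : O(H) \to G/H$ is a group homomorphism, the only subtle point being well-definedness, which was already handled in the discussion preceding the statement: for $f \in O(H)$, the quantity $f(x) - x$ is independent of $x$ modulo $H$, so $c(f) \in G/H$ is unambiguously defined, and conversely any permutation admitting such a constant lies in $O(H)$. So I would take that for granted and focus on multiplicativity.

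First I would check that $c(\op{id}) = 0$, which is immediate since $\op{id}(x) - x = 0$. Then, given $f, g \in O(H)$, I would compute $c(f \circ g)$ by evaluating at a convenient point: pick any $x \in G$, and write
\begin{align*}
(f\circ g)(x) - x &= \bigl(f(g(x)) - g(x)\bigr) + \bigl(g(x) - x\bigr).
\end{align*}
The second summand is $c(g) \bmod H$ by definition of $c(g)$. For the first summand, note $f(g(x)) - g(x) \equiv c(f) \bmod H$ because $c(f)$ is the constant value of $f(y) - y$ modulo $H$ for \emph{any} $y \in G$, in particular $y = g(x)$. Hence $(f \circ g)(x) - x \equiv c(f) + c(g) \ [H]$, i.e. $c(f \circ g) = c(f) + c(g)$. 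Since $O(H)$ is already known (from the earlier definition) to be a subgroup of $\sym_G$ containing $G$, this shows $c$ is a well-defined group homomorphism.

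There is essentially no obstacle here: the content is entirely in the observation that the defining constant of an isometry modulo $H$ can be read off at any argument, which lets the two contributions in the chain rule telescope correctly. The one thing worth stating explicitly for completeness is that $c$ is surjective — for $\delta \in G$, the translation $x \mapsto x + \delta$ (an element of $G \subseteq O(H)$ via the Cayley embedding) maps to $\delta \bmod H$ — and that its kernel is $O(H) \cap \{f : f(x) \equiv x\ [H]\ \forall x\}$, i.e. the isometries fixing every coset of $H$ setwise; but if the paper only claims "this defines a group homomorphism," the computation above suffices.
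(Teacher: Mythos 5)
Your proof is correct and follows exactly the paper's argument: you split $(f\circ g)(x)-x$ into $\bigl(f(g(x))-g(x)\bigr)+\bigl(g(x)-x\bigr)$ and use that the defining constant of an isometrie modulo $H$ can be evaluated at any point, in particular at $g(x)$, which is precisely the paper's computation. The extra remarks on surjectivity and the kernel are not required here (they appear in the following proposition) but are accurate.
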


\begin{proof}
    Il s'agit de montrer que pour $f,g\in O(H)$, on a $c(f\circ g)=c(f)+c(g)$. En effet, on observe pour $x\in G$ :
    $$f\circ g(x)-x\equiv f\circ g(x) - g(x) + g(x) -x \equiv c(f) + c(g) \ [H]$$
    en notant abusivement $c(f)$ un représentant de $c(f)$.
\end{proof}

\begin{prop}
    \normalfont Le noyau de $c$ est constitué des morphismes stabilisant les classes modulo $H$. On a alors la suite exacte suivante (quitte à ordonner les classes modulo $H$) :
    $$1\longrightarrow (\sym_H)^{[G:H]}\longrightarrow O(H) \longrightarrow G/H\longrightarrow 1$$
\end{prop}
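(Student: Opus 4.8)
The plan is to prove the statement in three moves: show that $c$ is surjective, identify $\ker c$ with the permutations of $G$ that stabilise each class modulo $H$, and then recognise that group of permutations as $(\sym_H)^{[G:H]}$; the short exact sequence follows formally by combining the first and second points (exactness at $O(H)$ and injectivity of $\ker c \hookrightarrow O(H)$ being automatic for the kernel of a morphism). For surjectivity I would invoke the Cayley image of $G$ inside $\sym_G$: since $G \subseteq O(H)$, for every $\delta \in G$ the translation $x \mapsto x + \delta$ lies in $O(H)$, and $c$ sends it to the class of $\delta$ in $G/H$; letting $\delta$ run over $G$ covers $G/H$.

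Next I would handle the kernel. By definition $f \in \ker c$ means $f(x) - x \in H$ for every $x \in G$, i.e. $f$ sends each coset $\bar x = x + H$ into itself. Conversely, if a permutation $f$ of $G$ satisfies $f(x) - x \in H$ for all $x$, then $f(x) - f(y) = (f(x)-x) - (f(y)-y) + (x-y) \equiv x - y\ [H]$, so $f$ automatically belongs to $O(H)$; hence $\ker c$ is exactly the set of permutations of $G$ stabilising each class modulo $H$, which is the first assertion. One point deserves care: "sends each coset into itself" upgrades to "permutes each coset onto itself" — since the cosets partition $G$, if $f(\bar x) \subseteq \bar x$ for all $\bar x$ then $\bigsqcup_{\bar x} f(\bar x) = f(G) = G = \bigsqcup_{\bar x} \bar x$, and disjointness forces $f(\bar x) = \bar x$ for every $\bar x$.

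Finally, the restriction map $f \mapsto \bigl(f\vert_{\bar x}\bigr)_{\bar x \in G/H}$ is a homomorphism from $\ker c$ into $\prod_{\bar x \in G/H} \sym_{\bar x}$ (the product of the symmetric groups on the individual cosets), because composition restricts coset-by-coset once each coset is preserved; it is injective since such an $f$ is determined by its restrictions, and surjective since any family of permutations of the cosets glues to a permutation of $G$, again using that the cosets are pairwise disjoint with union $G$. Each coset $\bar x$ is equipotent to $H$ (via $h \mapsto x+h$), so $\sym_{\bar x} \cong \sym_H$, and there are $[G:H]$ of them, whence $\ker c \cong (\sym_H)^{[G:H]}$ after ordering the classes; combined with surjectivity of $c$ this gives the exact sequence $1 \arw{} (\sym_H)^{[G:H]} \arw{} O(H) \arw{c} G/H \arw{} 1$. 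The only genuinely delicate step is the "into versus onto" passage, which must be argued without any finiteness hypothesis on $G$ or $H$ — the partition argument above is exactly what makes it go through in the infinite case. I would also note that no splitting is claimed, which is consistent with the fact that a section of $c$ would amount to a group-theoretic section of $G \to G/H$.
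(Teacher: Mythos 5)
Your proof is correct and follows essentially the same route as the paper: surjectivity of $c$ via the Cayley translations $x\mapsto x+\delta$, and identification of $\ker c$ with the permutations preserving each coset, glued from $(\sym_H)^{[G:H]}$. You simply make explicit some details the paper leaves implicit (the ``into versus onto'' partition argument and the isomorphism $\ker c\cong \prod_{\bar x}\sym_{\bar x}$), which is a welcome precision but not a different approach.
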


\begin{proof}
    La flèche $(\sym_H)^{[G:H]}\longrightarrow O(H)$ correspond à l'action sur chaque classe de $(\sym_H)^{[G:H]}$ sur $G$, qui est fidèle et se fait bien par isométries modulo $H$ puisqu'il existe une constante modulo $H$ ($0$) pour chaque élément dans l'image de ce morphisme. Ensuite, le morphisme $c$ est surjectif puisque le diagramme suivant commute :
\[\begin{tikzcd}
	G && {O(H)} \\
	& {G/H}
	\arrow[hook, from=1-1, to=1-3]
	\arrow[two heads, from=1-1, to=2-2]
	\arrow["c", from=1-3, to=2-2]
\end{tikzcd}\]
\end{proof}

\begin{corol}
        \normalfont On a donc directement, lorsque $G$ est fini, en notant $g$ le cardinal de $G$ et $h$ le cardinal de $H$ :
    $$\boxed{\abs{O(H)}=\frac{(h!)^{\frac g h}g}h}$$
\end{corol}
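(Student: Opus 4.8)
Le plan est de lire directement la conclusion sur la suite exacte courte établie juste avant :
$$1\longrightarrow (\sym_H)^{[G:H]}\longrightarrow O(H) \longrightarrow G/H\longrightarrow 1.$$
Comme $G$ est fini, tous les groupes qui y figurent le sont aussi, et pour tout morphisme de groupes finis le cardinal de la source est le produit de celui du noyau par celui de l'image. Appliqué au morphisme $c$, dont le noyau s'identifie à $(\sym_H)^{[G:H]}$ et dont l'image est $G/H$ tout entier (surjectivité de $c$), cela donne
$$\abs{O(H)}=\abs{(\sym_H)^{[G:H]}}\cdot\abs{G/H}.$$

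Il ne resterait alors qu'à évaluer les deux facteurs. On a $\abs{G/H}=g/h$ par le théorème de Lagrange, et de même $[G:H]=g/h$, donc, puisque $\abs{\sym_H}=h!$,
$$\abs{(\sym_H)^{[G:H]}}=(h!)^{g/h}.$$
En multipliant on obtient $\abs{O(H)}=(h!)^{g/h}\cdot \dfrac{g}{h}$, ce qui est exactement la formule encadrée.

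Le seul point à ne pas négliger — mais il a déjà été traité dans la proposition précédente, donc il n'y a pas d'obstacle réel — est la justification de l'exactitude de la suite : l'identification du noyau de $c$ avec le produit $(\sym_H)^{[G:H]}$ (une isométrie modulo $H$ qui préserve globalement chaque classe revient au choix indépendant d'une permutation de chacune des $[G:H]$ classes, chacune en bijection avec $H$) et la surjectivité de $c$ (fournie par l'inclusion de Cayley $G\hookrightarrow O(H)$ composée avec $c$, qui redonne la projection $G\twoheadrightarrow G/H$). Une fois ces deux faits admis, la démonstration se ramène entièrement au comptage ci-dessus.
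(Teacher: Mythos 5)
Votre démonstration est correcte et suit exactement la voie prévue par le papier : le corollaire y est présenté comme une conséquence directe de la suite exacte $1\to (\sym_H)^{[G:H]}\to O(H)\to G/H\to 1$, et votre calcul $\abs{O(H)}=(h!)^{g/h}\cdot g/h$ est précisément le comptage attendu. Rien à redire.
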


\section{Perspectives}

Une question naturelle est de savoir si l'on peut généraliser la convolution aux groupes non commutatifs, en sommant seulement sur les sous-groupes distingués. Malheureusement, on y perd la commutativité et l'associativité (le problème pour l'associativité étant qu'on peut avoir $H$ distingué dans $K$ et $K$ distingué dans $G$ sans que $H$ ne soit distingué dans $G$). Une piste de généralisation est peut-être d'appliquer cela aux modules sur un anneau principal dont les quotients sont de cardinal fini (par exemple $k[X]$ avec $k$ un corps fini). \\\\

De même, les considérations sur le groupe symétrique ne fonctionnent plus lorsque le groupe de départ est non commutatif (la définition d'isométrie doit être changée pour cela).  \\\\

En discutant avec mon collègue Rafik SOUANEF, on s'est rendus compte que la fonction $\mu$ semblait être liée au nombre de sous-groupes à structure fixée - c'est à dire, étant donné un groupe abélien fini $A$, le nombre de sous-groupes de $A$ isomorphes à un groupe $B$ fixé. Il se trouve que l'on a réussi à donner une formule pour ce nombre, en fonction des facteurs invariants de $A$ et $B$, cf \cite{sousgroupes}. On a même trouvé une seconde démonstration qui n'utilise pas la convolution. Encore une fois, cette formule avait déjà été trouvée dans \cite{delsarte} par une méthode différente, ce dont on s'est aperçus plus tard.

\nocite{*}
\bibliographystyle{plainurl}
\bibliography{refs}

\begin{thebibliography}{1}

\bibitem{delsarte}
S.~Delsarte.
\newblock Fonctions de mobius sur les groupes abeliens finis.
\newblock {\em Annals of Mathematics , Jul., 1948, Second Series, Vol. 49, No.
  3 (Jul., 1948), pp. 600-609}.

\bibitem{sousgroupes}
Souanef et~Mallet-Burgues.
\newblock Nombre de sous-groupes de structure donnée.

\bibitem{lovasz}
Lovasz.
\newblock {\em Operations with structures}.

\bibitem{td3}
N.~Marquis.
\newblock Td 3 algèbre 1 (ens paris).
\newblock URL: \url{https://www.math.ens.psl.eu/~nmarquis/TD_3.pdf}.

\end{thebibliography}

\end{document}